\theoremstyle{plain}
\newtheorem{theorem}{Theorem}[section]
\newtheorem{lemma}[theorem]{Lemma}
\newtheorem{prop}[theorem]{Proposition}
\def\Hom{\operatorname{Hom}}
\def\Ind{\operatorname{Ind}}
\def\Isom{\operatorname{Isom}}
\def\GL{\mathrm{GL}}
\def\Mp{\mathrm{Mp}}
\def\Sp{\mathrm{Sp}}
\def\U{\mathrm{U}}
\def\CC{\mathbb{C}}
\begin{document}

\title[On the Howe duality conjecture]
{On the Howe duality conjecture\\in classical theta correspondence}
\author{Wee Teck Gan}
\address{Department of Mathematics, National University of Singapore, 10 Lower Kent Ridge Road, Singapore 119076}
\email{matgwt@nus.edu.sg}
\thanks{W.T. Gan is partially supported by an MOE Tier 1 Grant
  R-146-000-155-112  and an MOE Tier Two grant  MOE2012-T2-2-042.}

\author{Shuichiro Takeda}
\address{Mathematics Department, University of Missouri, Columbia, 202
  Math Sciences Building, Columbia, MO, 65211}
\email{takedas@missouri.edu} 
\thanks{S. Takeda is partially supported by NSF grant DMS-1215419.}
\dedicatory{to Jim Cogdell  \\ on the occasion of his 60th birthday}

\subjclass[2000]{Primary 11F27, Secondary 22E50}

\date{}

\maketitle

\begin{abstract}
We give a proof of the Howe duality conjecture for the (almost) equal
rank dual pairs. For arbitrary dual pairs, we prove
the irreducibility of the (small) theta lifts for all
tempered representations. Our proof works for any
nonarchimedean local field of characteristic not $2$ and in arbitrary residual characteristic.

\end{abstract}

\section{\textbf{Introduction}}

Let $F$ be a nonarchimedean local field of characteristic not $2$ and
residue characteristic $p$.  Let $E$ be $F$ itself or a quadratic
field extension of $F$.  For $\epsilon = \pm$, we consider a
$-\epsilon$-Hermitian space $W$ over $E$ of dimension $n$ and an
$\epsilon$-Hermitian space $V$ of dimension $m$.  We shall write
$W_n$ or $V_m$ if there is a need to be specific about the dimension
of the space in question.  Set
\[ \epsilon_0 = \begin{cases} \epsilon \text{ if $E=F$;} \\ 0 \text{
if $E \ne F$.}  \end{cases} \] 

\vskip 5pt

Let $G(W)$ and $H(V)$ denote the isometry group of $W$ and $V$
respectively.  Then the group $G(W) \times H(V)$ forms a dual
reductive pair and possesses a Weil representation $\omega_{\psi}$
which depends on a nontrivial additive character $\psi$ of $F$ (and
some other auxiliary data which we shall suppress for now).  To be
precise, when $E=F$ and one of the spaces, say $V$, is odd
dimensional, one needs to consider the metaplectic double cover of
$G(W)$; we shall simply denote this double cover by $G(W)$ as
well. The various cases are tabulated in \cite[\S 3]{gi}.  

\vskip 5pt

In the theory of local theta correspondence, one is interested in the
decomposition of $\omega_{\psi}$ into irreducible representations of
$G(W) \times H(V)$.  More precisely, for any irreducible admissible
representation $\pi$ of $G(W)$, one may consider the maximal
$\pi$-isotopic quotient of $\omega_{\psi}$. This has the form $\pi
\otimes \Theta_{W,V,\psi}(\pi)$ for some smooth representation
$\Theta_{W,V, \psi}(\pi)$ of $H(V)$; we shall frequently suppress $(W,
V,\psi)$ from the notation if there is no cause for confusion.  It was
shown by Kudla \cite{k83} that $\Theta(\pi)$ has finite length
(possibly zero), so we may consider its maximal semisimple quotient
$\theta(\pi)$.  One has the following fundamental conjecture due to
Howe \cite{howe}: 

\vskip 5pt

\noindent\underline{{\bf Howe Duality Conjecture for $G(W) \times
H(V)$}} \vskip 5pt
\noindent (i) $\theta(\pi)$ is either $0$ or irreducible.  \vskip 5pt

\noindent (ii) If $\theta(\pi) = \theta(\pi') \ne 0$, then $\pi =
\pi'$.  \vskip 10pt
 
We take note of the following theorem: \vskip 5pt

\begin{theorem}\label{T:kudla-walds}
\noindent (i) If $\pi$ is supercuspidal, then $\Theta(\pi)$ is either
zero or irreducible (and thus is equal to $\theta(\pi)$).  Moreover,
for any irreducible supercuspidal $\pi$ and $\pi'$,
\[ \Theta(\pi) \cong \Theta(\pi')\ne 0 \Longrightarrow \pi \cong \pi'.
\]

\vskip 5pt

\noindent (ii) $\theta(\pi)$ is multiplicity-free.

\vskip 5pt
\noindent (iii) If $p \ne 2$, the Howe duality conjecture holds.
\end{theorem}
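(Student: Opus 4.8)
The plan is to assemble the three assertions from the work of Kudla and Waldspurger; for each I sketch the argument I would follow. For (i), the key fact is that a supercuspidal $\pi$ is both projective and injective in the category of smooth representations of $G(W)$, so the functor of $\pi$-coinvariants is exact and $\pi \otimes \Theta(\pi)$ occurs in $\omega_\psi$ as a direct summand. I would then examine the Jacquet modules of $\Theta(\pi)$ along the maximal parabolic subgroups $P = MN$ of $H(V)$ via Kudla's filtration of $\omega_\psi$ restricted to $G(W) \times P$, whose graded pieces are, after normalization, of the shape $\Ind_P^{H(V)}(\rho \otimes \omega_\psi^{W,V'})$ with $V' \subseteq V$ of strictly smaller dimension and $\rho$ a representation of the $\GL$-factor of $M$. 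Taking $\pi$-coinvariants kills every graded piece for which the smaller dual pair $G(W) \times H(V')$ would force a proper Jacquet module of $\pi$ to occur; the surviving pieces pin down the cuspidal support of each irreducible constituent of $\Theta(\pi)$ and, together with a short length count, force $\Theta(\pi)$ to be zero or irreducible, so that $\Theta(\pi) = \theta(\pi)$.

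The injectivity clause of (i) and the multiplicity-freeness (ii) I would both obtain by the doubling method. Doubling $W$ to $W \oplus (-W)$ and using the fact that $\omega_\psi^{W\oplus(-W),V}$ restricted to $G(W)\times G(W)\times H(V)$ is isomorphic to $\omega_\psi^{W,V}\otimes(\omega_\psi^{W,V})^\vee$, one contracts a pair of nonzero elements of $\Hom_{G(W)\times H(V)}(\omega_\psi^{W,V},\pi\otimes\sigma)$ and of $\Hom_{G(W)\times H(V)}(\omega_\psi^{W,V},\pi'\otimes\sigma)$ along the $H(V)$-invariant pairing on $\sigma\otimes\sigma^\vee$ to get a nonzero element of $\Hom_{G(W)\times G(W)}\bigl((\omega_\psi^{W\oplus(-W),V})_{H(V)},\ \pi\otimes(\pi')^\vee\bigr)$. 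By the local Siegel--Weil theorem of Kudla and Rallis the $H(V)$-coinvariants on the left are governed by a degenerate principal series of $G(W\oplus(-W))$ attached to its Siegel parabolic, and a see-saw argument bounds the resulting space of functionals: for supercuspidal $\pi$ the bound is $\dim\Hom_{G(W)}(\pi,\pi')\le 1$, which forces $\pi\cong\pi'$; for general $\pi$ one gets $\dim\Hom_{G(W)\times H(V)}(\omega_\psi,\pi\otimes\sigma)\le 1$ after handling the non-open orbits (carried out in Kudla's notes), which is exactly (ii). None of this is sensitive to the residue characteristic; it is the local content of the doubling zeta integral of Piatetski-Shapiro and Rallis.

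For (iii) I would invoke Waldspurger's theorem, which upgrades (i) and (ii) to the full Howe duality conjecture under the hypothesis $p\ne 2$. Granting the supercuspidal case, one inducts on $\dim W + \dim V$: an arbitrary irreducible $\pi$ of $G(W)$ is realized inside a representation parabolically induced from supercuspidal data on a proper Levi subgroup, and Kudla's filtration together with the compatibility of the theta correspondence with the Jacquet and parabolic-induction functors --- using (ii) to keep track of multiplicities --- reduces the irreducibility of $\theta(\pi)$ and the injectivity of $\pi\mapsto\theta(\pi)$ to the supercuspidal level. I expect the genuinely hard point here to be precisely the restriction $p\ne 2$ woven into Waldspurger's induction; removing it, at least in the (almost) equal rank range, is the object of the rest of this paper, so in what follows Theorem~\ref{T:kudla-walds} will be used only as a black box.
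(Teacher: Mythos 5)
The paper does not prove Theorem~\ref{T:kudla-walds}; it is a compilation of three known results cited from the literature, and the introduction takes pains to emphasize that the three proofs use \emph{quite disjoint} techniques. Measured against what those references actually do, your proposal correctly locates (i) (Kudla's Jacquet-module argument via his filtration is the right skeleton, with the doubling see-saw giving the injectivity clause) but misattributes both (ii) and (iii), and the misattributions matter because they paper over the genuine difficulties.

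For (ii), you claim that the doubling see-saw plus the local Siegel--Weil identification of the $H(V)$-coinvariants with a degenerate principal series yields $\dim\Hom_{G(W)\times H(V)}(\omega_\psi,\pi\otimes\sigma)\le 1$ for arbitrary $\pi$ ``after handling the non-open orbits.'' That handling is precisely where the argument breaks down: the closed and intermediate orbits in the Siegel degenerate principal series contribute genuine functionals that the see-saw alone cannot kill for non-supercuspidal $\pi$, which is why multiplicity one remained open in this generality. Li--Sun--Tian's proof \cite{lst} is not a refinement of the doubling computation; it is a Gelfand--Kazhdan-type argument showing the vanishing of certain equivariant distributions, and this is what the paper cites. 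Your doubling computation does legitimately prove the injectivity half of (i) for supercuspidal $\pi$ (because cuspidality kills the lower strata), but it does not extend to (ii).

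For (iii), you propose to deduce Howe duality for general $\pi$ from the supercuspidal case by induction on $\dim W + \dim V$, using Kudla's filtration and the compatibility of theta with Jacquet and parabolic-induction functors. If such an induction closed, the hypothesis $p\ne 2$ would never appear, and the present paper and \cite{gt} would be unnecessary. Waldspurger's actual argument \cite{w90} is a $K$-type analysis carried out in various lattice models of the Weil representation; the restriction $p\ne 2$ enters through the structure of those lattices and the associated generalized lattice model filtrations, not through an inductive passage to Levi subgroups. The naive Jacquet-module induction is exactly what fails in general, because $\Theta(\pi)$ need not be the full induced representation and the Langlands quotients of distinct standard modules can a priori coincide on the smaller group; controlling this is the whole content of \S\S 2--4 of this paper. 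Since you conclude by treating the theorem as a black box anyway, the practical damage is small, but as a proof sketch the accounts of (ii) and (iii) would need to be replaced wholesale by the Gelfand--Kazhdan argument and the lattice-model $K$-type argument respectively.
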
 

The statement (i) is a classic theorem of Kudla \cite{k83} (see also \cite{mvw}),
whereas (iii) is a well-known result of Waldspurger \cite{w90}. The
statement (ii), on the other hand, is a recent result of Li-Sun-Tian
\cite{lst}.  We note that the techniques for proving the three
statements in the theorems are quite disjoint from each other. For
example, the proof of (i) is based on arguments using the doubling
see-saw and Jacquet modules of the Weil representation: these have
become standard tools in the study of local theta correspondence. The
proof of (iii) is based on $K$-type analysis and uses various lattice
models of the Weil representation. Finally, the proof of (ii) is based
on an argument using the Gelfand-Kazhdan criterion for the
(non-)existence of equivariant distributions.  

\vskip 5pt

In this paper, we shall assume statements (i) and (ii), but not
statement (iii). Indeed, the purpose of this paper is to extend the
results of the above theorem to the case of more general $\pi$ and
arbitrary residue characteristic, using the same tools in the proof of
Theorem \ref{T:kudla-walds}(i).  More precisely, we shall prove the
following two results.  

\vskip 5pt

\begin{theorem}  \label{T:main}
 If $\pi$ is an irreducible tempered representation of $G(W)$, then $\theta(\pi)$ is either
zero or irreducible.  Moreover,
for any irreducible tempered $\pi$ and $\pi'$,
\[ \theta(\pi) \cong \theta(\pi')\ne 0 \Longrightarrow \pi \cong \pi'.
\]
When $m \leq (n+\epsilon_0) +1$, $\theta(\pi)$ is tempered if it is nonzero.
 \end{theorem}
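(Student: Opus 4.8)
The plan is to read off temperedness from Casselman's criterion. Since $\theta(\pi)$ is already known to be irreducible (the first assertion of the theorem), it suffices to show that for every maximal parabolic $Q_a\subset H(V)$ with Levi $\GL_a(E)\times H(V_{m-2a})$, every exponent occurring in the normalized Jacquet module $R_{Q_a}(\theta(\pi))$ has non-positive $|\det_a|$-part (with the usual normalization, i.e.\ lies in the closed negative chamber for the coweight dual to the simple root defining $Q_a$). Because $\theta(\pi)$ is a quotient of the big theta lift $\Theta(\pi)=\Theta_{W,V,\psi}(\pi)$ and $R_{Q_a}$ is exact, it is in fact enough to bound the exponents of $R_{Q_a}(\Theta(\pi))$; note this is an admissible representation of finite length by Kudla's theorem recalled in the introduction.

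The key computation is via Kudla's filtration of the Jacquet module of the Weil representation along $Q_a$ — the same tool used to prove Theorem \ref{T:kudla-walds}(i). As a representation of $\GL_a(E)\times H(V_{m-2a})\times G(W)$, $R_{Q_a}(\omega_\psi)$ carries a finite filtration whose successive quotients have, up to normalization, the shape $\Ind\!\big(\chi_V|\det|^{\bullet}\text{-segment}\boxtimes \tau|\det|^{t}\big)\otimes \omega_\psi^{\,W_{n-2b},\,V_{m-2a}}$, where $b$ runs over $0\le b\le a$, $\tau$ runs over the $\GL_b(E)$-blocks appearing in the Jacquet modules $R_{P_b}(\pi)$ of $\pi$ along the parabolics of $G(W)$ with Levi $\GL_b(E)\times G(W_{n-2b})$, the ``$\chi_V$-segment'' is a length $a-b$ arithmetic progression of twists of the splitting character $\chi_V$, and the twist exponent $t$ together with the endpoints of the segment are explicit affine functions of $m-n$ and $\epsilon_0$. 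Applying this to $\pi$ via Frobenius reciprocity, $R_{Q_a}(\Theta(\pi))$ is built from representations $\tau_1^{(a,b)}\otimes\Theta_{W_{n-2b},\,V_{m-2a}}(\pi_b)$, with $\tau_1^{(a,b)}$ the indicated $\GL_a(E)$-block and $\pi_b$ the corresponding constituent of $R_{P_b}(\pi)$. Since only the $\GL_a(E)$-factor meets the split center of the Levi of $Q_a$, the exponent relevant to Casselman's criterion is precisely the central $|\det_a|$-exponent of $\tau_1^{(a,b)}$, namely the average of the (at most $b$) exponents carried by $\tau|\det|^{t}$ and the (at most $a-b$) exponents of the $\chi_V$-segment; in particular the factor $\Theta_{W_{n-2b},V_{m-2a}}(\pi_b)$ on $H(V_{m-2a})$ plays no role here.

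Now temperedness of $\pi$ enters: by Casselman's criterion applied to $\pi$, each $\GL_b(E)$-block $\tau$ occurring in $R_{P_b}(\pi)$ has non-positive central $|\det_b|$-exponent, so $\tau|\det|^{t}$ has exponent $\le t$. A direct inspection of Kudla's filtration shows that the twist exponent $t$ and all exponents of the $\chi_V$-segments are $\le \tfrac{m-n-\epsilon_0-1}{2}$, which is $\le 0$ exactly under the hypothesis $m\le n+\epsilon_0+1$, with equality only at almost-equal rank. Averaging, the central $|\det_a|$-exponent of each $\tau_1^{(a,b)}$ is $\le 0$, so every exponent of $R_{Q_a}(\Theta(\pi))$ is $\le0$ for every maximal $Q_a$, and Casselman's criterion yields that $\theta(\pi)$ is tempered. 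The supercuspidal instance is the cleanest: then $R_{P_b}(\pi)=0$ for $b\ge1$, only the $b=0$ term survives, and the central exponent of $\tau_1^{(a,0)}$ reduces to $\tfrac{m-n-\epsilon_0-a}{2}$, which is negative for all $a\ge1$ except for $(a,m)=(1,\,n+\epsilon_0+1)$, where it is $0$ — in accordance with $\theta(\pi)$ being a genuine discrete series at almost-equal rank, and supercuspidal when $m$ is the first occurrence.

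The step I expect to demand the most care is the precise bookkeeping inside Kudla's filtration: the exact endpoints of the $\chi_V$-segments and the exact value of the twist exponent $t$ attached to the $\GL_b(E)$-blocks of $\pi$, matched against the correct normalization of the half-sum-of-roots shift in Casselman's criterion, must be pinned down uniformly across all cases — symplectic--orthogonal in either order, unitary, and the metaplectic variant — in order to see that the threshold sits exactly at $m=n+\epsilon_0+1$ and that the boundary exponents fall on the closed, not the open, side of the tempered cone. A secondary point is that $G(W)$ or $H(V)$ may be the disconnected group $\mathrm{O}$ or the metaplectic cover, so one must invoke the forms of Casselman's criterion and of Kudla's filtration valid in those settings; this is routine but should be addressed. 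Crucially, none of the argument uses lattice models or $K$-type analysis, so it is insensitive to the residue characteristic, including $p=2$.
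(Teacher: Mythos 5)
Your proposal addresses only the last sentence of Theorem \ref{T:main} (the temperedness of $\theta(\pi)$ when $m\le n+\epsilon_0+1$), and even there you explicitly take the first assertion as an input: you write that ``since $\theta(\pi)$ is already known to be irreducible (the first assertion of the theorem), it suffices to show\dots''. But the irreducibility of $\theta(\pi)$ and the implication $\theta(\pi)\cong\theta(\pi')\ne0\Rightarrow\pi\cong\pi'$ \emph{are} the main content of Theorem \ref{T:main}; they are exactly what one cannot assume. Your Jacquet-module exponent computation, correct as far as it goes, gives no handle on either of those statements: bounding the central exponents of $R_{Q_a}(\Theta(\pi))$ tells you the irreducible constituents of $\Theta(\pi)$ are tempered, but not that there is only one of them up to isomorphism, and it says nothing at all about two different tempered $\pi$'s having disjoint theta lifts. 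So what you have written is, at best, a self-contained re-derivation of the temperedness clause conditioned on the rest of the theorem.

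For comparison, the paper's proof of the substantive parts runs quite differently. The irreducibility and injectivity are obtained from the doubling see-saw: one shows that $\Hom_{H(V)}(\theta(\pi'),\theta(\pi))$ embeds into a Hom space out of a degenerate principal series on the doubled group, whose Kudla--Rallis filtration (Lemma \ref{L:key}) has bottom layer $C_c^\infty(G(W))$ and whose higher layers are killed by Casselman's criterion because $\pi$ is tempered; this gives a clean one-dimensionality-with-equality statement when $m\le n+\epsilon_0$ or $m>2(n+\epsilon_0)$ (Theorem \ref{T:temp}). The remaining range is handled by climbing the Witt tower: Proposition \ref{P:rob} shows $\theta_{W,V_{d'}}(\pi)$ (for $d'$ large) is the Langlands quotient of a standard module built from any constituent $\sigma_i\subset\theta_{W,V_m}(\pi)$, and Proposition \ref{P:ds} pins down the possible Langlands data of $\sigma_i$ tightly enough that these standard modules really are standard and hence have a unique Langlands quotient; uniqueness of the Langlands data then forces $\sigma_1\cong\sigma_2$, and Theorem \ref{T:kudla-walds}(ii) upgrades ``isotypic'' to ``irreducible''. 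The temperedness at $m=n+\epsilon_0+1$ drops out of Proposition \ref{P:ds} (neither (a) nor (b) can occur there), and the temperedness for $m\le n+\epsilon_0$ is cited from \cite{gi}. Your Kudla-filtration-plus-Casselman computation is in the same spirit as the proof of Proposition \ref{P:ds}, so that one clause of the theorem is not lost; but without the see-saw inequality, the tower-climbing, and the uniqueness-of-Langlands-data argument, the proposal does not prove the theorem.

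As a smaller point: for the temperedness claim itself you should verify Casselman's criterion along \emph{all} standard parabolics, or at least record why maximal ones suffice in the present setting (including for the disconnected orthogonal and metaplectic cases); and when you average exponents across the $\chi_V$-segment and the $\GL_b$-block of $R_{P_b}(\pi)$, you need the bound on each slot, not merely on the twist $t$ — this is precisely the bookkeeping that $\lambda_{t-a}=(n-m+t-a+\epsilon_0)/2$ in the paper's Lemma C.2 normalization is designed to make transparent, and it is worth doing carefully before asserting the threshold sits exactly at $m=n+\epsilon_0+1$.
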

 
\vskip 5pt

\begin{theorem} \label{T:equal} 
If $|m-( n+\epsilon_0)| \leq 1$, then the
Howe duality conjecture holds for $G(W) \times H(V)$.
\end{theorem}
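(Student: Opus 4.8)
The plan is to reduce Theorem~\ref{T:equal} to its tempered case, Theorem~\ref{T:main}, by means of the Langlands classification together with the compatibility of theta lifts with parabolic induction. Fix an almost‑equal‑rank pair, so $|m-(n+\epsilon_0)|\le 1$, and let $\pi$ be an irreducible representation of $G(W_n)$. Write $\pi$ as the Langlands quotient of a standard module $\Ind_{P}^{G(W_n)}(\tau_1|\det|^{s_1}\boxtimes\cdots\boxtimes\tau_r|\det|^{s_r}\boxtimes\pi_0)$, where the $\tau_i$ are irreducible tempered representations of $\GL_{k_i}(E)$, $s_1\ge\cdots\ge s_r>0$, and $\pi_0$ is irreducible tempered on a smaller group $G(W_{n_0})$. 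Peeling off the leading segment, $\pi$ is the Langlands quotient of $\Ind_{P_k}^{G(W_n)}(\tau|\det|^s\boxtimes\pi_1)$ with $k=k_1$, $\tau=\tau_1$, $s=s_1$, and $\pi_1$ an irreducible representation of $G(W_{n-2k})$ whose Langlands data consists of the remaining $r-1$ segments and the tempered part $\pi_0$; in particular every exponent of $\pi_1$ is $\le s$. I would argue by induction on $r$, the base case $r=0$ being exactly Theorem~\ref{T:main}. The inductive hypothesis to carry, besides statements (i) and (ii) of the conjecture for representations with fewer than $r$ segments of \emph{every} almost‑equal‑rank pair, consists of two structural facts: (a) if the theta lift of such a representation $\sigma$ is nonzero, it is the Langlands quotient whose tempered part is $\theta(\sigma_0)$ (with $\sigma_0$ the tempered part of $\sigma$) and whose segments are those of $\sigma$ twisted by the relevant splitting character $\chi$; and (b) every constituent of the big theta lift $\Theta(\sigma)$ has all its exponents bounded by the leading exponent of $\sigma$ — which in the base case is the statement that $\Theta(\sigma)$ has only tempered constituents, a consequence of the temperedness of $\theta(\sigma)$ in Theorem~\ref{T:main} (valid since the smaller pair still satisfies $m\le(n+\epsilon_0)+1$).

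The engine of the argument is a statement of the following shape: in the almost‑equal‑rank range, $\Theta_{W_n,V_m}(\pi)$ is governed by $\Ind_{Q_k}^{H(V_m)}\big(\tau\chi|\det|^{s}\boxtimes\Theta_{W_{n-2k},V_{m-2k}}(\pi_1)\big)$, where $Q_k$ is the parabolic of $H(V_m)$ with Levi $\GL_k(E)\times H(V_{m-2k})$; concretely, $\theta(\pi)$ is either $0$ or equals the Langlands quotient obtained by adjoining the segment $\tau\chi|\det|^s$ to $\theta(\pi_1)$. To prove this I would proceed as follows. First, realize $\pi$ as the unique irreducible submodule of an ``opposite'' standard module induced from $\tau^\vee|\det|^{-s}$, and use that big theta carries submodules to quotients — a standard fact, traceable to the identification $\Theta(\rho)\cong(\omega_\psi\otimes\rho^\vee)_{G(W_n)}$ together with the right‑exactness of coinvariants — so that the problem becomes the computation of $\Theta_{W_n,V_m}$ of a full induced representation. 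Second, compute the latter by passing to the $Q_k$‑Jacquet module and invoking Kudla's filtration of $R_{Q_k}(\omega_\psi)$, whose successive subquotients are normalized induced representations built from $\Theta_{W_{n-2k},V_{m-2a}}$, $0\le a\le k$. Third, apply Frobenius reciprocity and compare cuspidal supports to show that only the leading ($a=k$) piece can contribute, the others carrying $\GL$‑exponents — shifted by half‑sums of roots — incompatible with the data; this is where the hypotheses that $s$ is the largest exponent of $\pi$ and that $\Theta(\pi_1)$ has exponents $<s$ (the inductive bound (b)) are used. Finally, track the contragredients and the $\chi$‑twist through the MVW involution, under which $\Theta$ commutes with passage to contragredients.

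Granting the engine, the theorem follows. Apply the inductive hypothesis to the pair $G(W_{n-2k})\times H(V_{m-2k})$, which is again in range since $(m-2k)-\big((n-2k)+\epsilon_0\big)=m-(n+\epsilon_0)$: either $\Theta_{W_{n-2k},V_{m-2k}}(\pi_1)=0$, in which case the engine forces $\theta(\pi)=0$, or $\Theta(\pi_1)$ has the unique irreducible quotient $\theta(\pi_1)$, which by (a) is the Langlands quotient with tempered part $\theta(\pi_0)$ (tempered by Theorem~\ref{T:main}) and segments of exponents $\le s_2<s$, and all of whose constituents have exponents $<s$ by (b). Hence $\Ind_{Q_k}^{H(V_m)}(\tau\chi|\det|^s\boxtimes\Theta(\pi_1))$ has a unique irreducible quotient — namely the Langlands quotient of the standard module $\Ind^{H(V_m)}(\tau\chi|\det|^s\boxtimes\tau_2\chi|\det|^{s_2}\boxtimes\cdots\boxtimes\theta(\pi_0))$, by the usual reordering (exponent) argument since $s>s_2\ge\cdots$ and all inducing data are tempered — so $\Theta(\pi)$ has a unique irreducible quotient; invoking that $\theta(\pi)$ is multiplicity‑free (Theorem~\ref{T:kudla-walds}(ii)) we conclude that $\theta(\pi)$ is this Langlands quotient or is $0$, which is statement (i). This also propagates (a), and (b) for $\pi$ follows from the engine and (b) for $\pi_1$. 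For (ii): if $\theta(\pi)\cong\theta(\pi')\ne 0$ for irreducible representations $\pi,\pi'$ of $G(W_n)$, uniqueness of Langlands data forces $k=k'$, $\tau\cong\tau'$, $s=s'$, and $\theta(\pi_1)\cong\theta(\pi_1')\ne0$, whence $\pi_1\cong\pi_1'$ by the inductive form of (ii) and so $\pi\cong\pi'$. Finally, the condition $|m-(n+\epsilon_0)|\le1$ is symmetric in the two members of the pair, so the same argument with $G(W)$ and $H(V)$ interchanged completes the proof of the Howe duality conjecture for $G(W)\times H(V)$.

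The step I expect to be the main obstacle is the engine, and within it the third point: showing that the non‑leading pieces of Kudla's filtration genuinely contribute nothing, so that one obtains honest short exact sequences — and surjections in the right direction — rather than mere subquotient relations. This rests on a careful analysis of the cuspidal supports of those pieces and on propagating the exponent bound (b), whose base case — that $\Theta(\pi_0)$ has only tempered constituents for tempered $\pi_0$ — is exactly where the temperedness half of Theorem~\ref{T:main} (valid precisely in the range $m\le(n+\epsilon_0)+1$) is indispensable. The remaining points — keeping the splitting characters, the MVW contragredients, and the two signs of $m-(n+\epsilon_0)$ consistently in play, and verifying that $\pi$ really does embed in the claimed opposite standard module — are technical bookkeeping rather than conceptual difficulties.
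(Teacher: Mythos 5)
Your overall strategy coincides with the paper's: reduce to the tempered case via the Langlands classification, use compatibility of theta lifts with parabolic induction, and invoke uniqueness of the Langlands quotient. The paper, however, works with the full standard module at once: it uses Lemma~\ref{L:equal} (that $\Theta(\pi_0)$ is irreducible tempered for tempered $\pi_0$, a strengthening in the almost-equal-rank range of the last clause of Theorem~\ref{T:main}), and then simply cites \cite[Prop.~C.4(ii)]{gi} and \cite[Prop.~8.1(iii)]{gs} for the fact that $\Theta(\pi)$ is a quotient of the induced representation \eqref{E:Lang_quotient2} built from the entire Langlands data of $\pi$, after which uniqueness of the Langlands quotient finishes both parts of the conjecture. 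You instead peel off one segment at a time and run an explicit induction on the number of segments, re-deriving the parabolic-induction compatibility (your ``engine'') from Kudla's filtration of $R_{Q_k}(\omega)$, Frobenius reciprocity, and an exponent comparison. This forces you to carry the auxiliary hypothesis (b) bounding the exponents of all constituents of $\Theta(\pi_1)$ for the nontempered truncation $\pi_1$ — a complication the paper avoids by going directly to the tempered part $\pi_0$ where Lemma~\ref{L:equal} applies cleanly. The step you identify as the main obstacle (ruling out contributions from the non-leading pieces of Kudla's filtration) is exactly the content of the cited propositions, so your plan is sound but amounts to reproving what the paper references; if you pursue it, you will also need to be careful that your ``opposite'' embedding, MVW twists, and splitting-character bookkeeping are consistent across all five families of dual pairs, which the cited propositions already handle uniformly. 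Your closing observation, that the condition $|m-(n+\epsilon_0)|\le 1$ is symmetric so the argument must be run in both directions to establish the full Howe duality conjecture, is a correct point left implicit in the paper.
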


\vskip 5pt

To be precise, Theorem \ref{T:equal} applies to
the dual pairs ${\rm O}_{2n+1} \times \Mp_{2n}$, ${\rm O}_{2n} \times
\Sp_{2n}$, ${\rm O}_{2n} \times \Sp_{2n+2}$, $\U_n \times \U_n$ and
$\U_n \times \U_{n+1}$.  We call them the (almost) equal rank dual
pairs. With Theorem \ref{T:equal}, the assumption
that $p \ne 2$ can be totally removed from all the results in
\cite{gs} (on the local Shimura correspondence) and also parts of
\cite{gi} (those dealing with the almost equal rank case, such as the
results on Prasad's conjecture in \cite[Appendix C]{gi}).  \vskip 5pt

\vskip 5pt

\vskip 5pt

We would like to point out some related results in the literature,
especially the paper \cite{rob} of Roberts and the papers \cite{M0,
muic, M1, M2} of Mui\'c: \vskip 5pt

\begin{enumerate}[$\bullet$]
\item In the context of Theorem \ref{T:main}, the temperedness of any
irreducible summand of $\theta(\pi)$ when $m \leq n+ \epsilon_0+1$ was checked by Roberts
\cite[Theorem 4.2]{rob}, at least for symplectic-even-orthogonal dual
pairs.  Further, the main idea in the proof of Theorem \ref{T:main}
can already be found in the proof of \cite[Theorem 4.4]{rob} (see
Proposition \ref{P:rob} below).  \vskip 5pt

\item In \cite{M0}, Mui\'c established Theorem \ref{T:main} (and much
more) for discrete series representations but the results there depended on the Moeglin-Tadi\'c (MT)
classification of discrete series representations, which was
conditional on some hypotheses.  We are not entirely sure whether the
MT classification is unconditional today. But our goal
here is to give a simple proof of the theorems above without resort to
classification.  \vskip 5pt

\item In \cite{M1}, Mui\'c dispensed with the MT classification and
proves some basic properties of $\Theta(\pi)$ for discrete series
representations $\pi$. For example, he showed in the context of Theorem
\ref{T:main} that $\Theta(\pi)$ is the direct sum of discrete series
representations if $m\leq n+\epsilon_0$. The techniques of proof used
in \cite{M1} are almost entirely based on the analysis of Jacquet
modules. The paper \cite{M1} does not establish Theorem \ref{T:main},
but we shall make use of some results such as \cite[Theorem 4.1, 4.2 and 6.1]{M1} in our
proof of Theorem \ref{T:main}.  In \cite[Prop. 8.1]{gs} and
\cite[Prop. C.1]{gi}, a self-contained and more streamlined proof of the relevant 
parts of \cite[Theorem 4.1, 4.2 and 6.1]{M1} was given for all the dual pairs
considered here.  We shall revisit and extend  this simpler proof in Proposition \ref{P:ds}.

\vskip 5pt

\item In \cite{M2}, assuming Theorem \ref{T:main} for discrete series representation, 
Mui\'c studied the
theta lifts of tempered representations. He determined $\Theta(\pi)$
(in terms of the theta lifts of discrete series representations) and showed the
irreducibility of $\theta(\pi)$ in many cases.  However, the proof of
parts of the main results \cite[Theorems 5.1 and 5.2]{M2} depended on
the MT classification in its use of \cite[Theorem 6.6]{M2}. We do not
use results from \cite{M2} in this paper. Rather, our Theorem
\ref{T:main} renders most of \cite{M2} unconditional,
and completes some results there.
\end{enumerate} 

\vskip 5pt

The proofs of the results in \cite{M0,muic, M1,M2} are based on some
intricate and explicit computations of Jacquet modules and some
detailed knowledge (short of classification) of the discrete series
representations of classical groups. On the other hand, the results of
this paper are proved in a simpler and more conceptual manner, with
the more intricate computations already done in \cite{gs, gi, rob}.
It amounts to an attempt to prove the Howe duality conjecture using
the techniques and principles found in \cite{k83} and \cite{kr05},
supplemented by \cite{lst}, so as to remove the $p\ne 2$ assumption in
Waldspurger's theorem.   

\vskip 5pt

 Shortly after the completion of this paper, the authors have succeeded in giving the proof of the full Howe duality conjecture. 
 The proof builds upon the techniques of this paper and will appear in \cite{gt}.  While the proof given in \cite{gt} uses the doubling see-saw argument of \S 2, the 
argument in \S 3 is completely replaced by a different  idea which
originated in Minguez's thesis \cite{mi}, with the result that the
tempered representations do not play any special role in \cite{gt}. In
view of this, we have decided to keep this paper as is, even though the results here are subsumed by \cite{gt}, especially since some useful results about the theta lifts of tempered representations are shown here (such as Proposition \ref{P:ds}).  
\vskip 5pt

\noindent{\bf Acknowledgments:} This project was begun and to a large
extent completed during the authors' participation in the Oberwolfach
workshop ``Modular Forms" during the week of April 28, 2014. We thank
the organizers J. Bruinier, A. Ichino, T. Ikeda and O. Imamoglou for
their invitation to the workshop and the Oberwolfach Institute for the
excellent and peaceful working conditions.  We especially want to
thank Goran Mui\'c for answering various questions about his papers
\cite{M0,muic, M1, M2} and  Alberto Minguez for 
enlightening discussions which lead us to complete the proof of the
Howe duality conjecture in \cite{gt}. 

\vskip 5pt

This paper is dedicated to Jim Cogdell who has inspired us not only
through his mathematics but also through his lucid expositions of often technical subjects in the theory of
automorphic forms, his warm personality  and his attitude of service
which  is evident to all, especially  in his interaction with his late
mentor, Professor Piatetski-Shapiro. Jim's work at the beginning of
his career concerns the construction of modular forms by theta
correspondence and we hope this paper is an appropriate  contribution
to his 60th birthday volume.  

\vskip 10pt

\section{\bf Special Case of Theorem \ref{T:main}} 

Before beginning the proof of Theorem \ref{T:main}, let us specify the extra
data needed to consider the Weil representation of $G(W) \times H(V)$;
these are needed to split the metaplectic cover over the dual pair.
We shall follow the setup of \cite[\S 3.2-3.3]{gi} in fixing a pair of
splitting characters ${\bf \chi} = (\chi_V, \chi_W)$, which are
certain unitary characters of $E^{\times}$, with associated Weil
representation $\omega_{W, V, {\bf \chi}, \psi}$.  We shall frequently
suppress ${\bf \chi}$ and $\psi$ from the notation.  \vskip 5pt
\vskip 5pt

In this section, we shall first prove Theorem
\ref{T:main} for the case $m \leq n+ \epsilon_0$ or $m
>2(n+\epsilon_0)$. Indeed, what we will prove is slightly stronger
than Theorem \ref{T:main}, which is stated as follows:
\vskip 5pt

\begin{theorem} \label{T:temp} 
If $m \leq n+\epsilon_0$ or $m >2(n+\epsilon_0)$, then for any
tempered $\pi \in {\rm Irr}(G(W))$, $\theta(\pi)$ is either zero or irreducible.  Moreover, for tempered $\pi$ and any irreducible representation
$\pi'$,
\[ 0 \ne \theta(\pi) \subset \theta(\pi') \Longrightarrow \pi \cong
\pi'.
\]
When $m \leq  n+\epsilon_0$, $\theta(\pi)$ is tempered if it is
nonzero. (Note that unlike Theorem \ref{T:main}, we do not
assume $\pi'$ is tempered.)
 \end{theorem}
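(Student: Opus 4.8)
The plan is to reduce all three assertions to structural properties of \emph{big} theta lifts, using the doubling see-saw of Kudla and Kudla's filtration of the Jacquet modules of the Weil representation (the tools of \cite{k83, kr05}), together with the temperedness of theta lifts of discrete series in the going-down range (Proposition \ref{P:ds}, with inputs from \cite{M1}) and the MVW involution. The two hypotheses $m\le n+\epsilon_0$ and $m>2(n+\epsilon_0)$ will be treated as mirror images under interchanging the roles of $G(W)$ and $H(V)$: in each case the see-saw lift in question sits in a range where $\Theta$ is nonzero and is forced into a standard module with strictly positive $\GL$-exponents. I describe the case $m\le n+\epsilon_0$.

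I would first settle temperedness and semisimplicity of $\Theta(\pi)$. Writing a tempered $\pi$ as a direct summand of $\Ind_P^{G(W)}(\tau\otimes\pi_0)$ with $\tau$ essentially square-integrable on a $\GL$-factor and $\pi_0$ a discrete series of a smaller group of the same type, the compatibility of $\Theta$ with parabolic induction — a consequence of Kudla's filtration — realizes $\Theta(\pi)$ as a summand of a sub or quotient of $\Ind_Q^{H(V)}(\tau'\otimes\Theta_{W_0,V_0}(\pi_0))$; the inequality $m\le n+\epsilon_0$ persists at the boundary, so Proposition \ref{P:ds} gives that $\Theta_{W_0,V_0}(\pi_0)$ is a finite direct sum of discrete series. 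As $\tau'$ and $\Theta_{W_0,V_0}(\pi_0)$ are then unitary, the induced representation is unitary, hence semisimple with tempered constituents; so $\Theta(\pi)=\theta(\pi)$ is a semisimple tempered representation, which is the last assertion.

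For irreducibility and injectivity I would invoke the doubling see-saw attached to $W^\square:=W\oplus(-W)$, with the symmetric subgroup $G(W)\times G(W)\subset G(W^\square)$ and the diagonal $H(V)\subset H(V)\times H(V)$. Using the see-saw identity with the MVW involution and $\Theta_{W,V}(\pi)^\vee\cong\Theta_{W,V}(\pi^\vee)$, one rewrites $\Hom_{G(W)\times G(W)}(\Theta_{V,W^\square}(\sigma),\pi\boxtimes\pi'^\vee)$ so that it detects whether $\sigma$ is a common irreducible constituent of $\Theta_{W,V}(\pi)$ and $\Theta_{W,V}(\pi')$; in particular this $\Hom$-space is nonzero whenever $0\ne\sigma\subseteq\theta(\pi)$ and $\sigma\subseteq\theta(\pi')$. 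On the other hand, $m\le n+\epsilon_0$ puts the lift of $\sigma$ from $H(V)$ up to the split space $W^\square$ of dimension $2n$ in the range where, by Kudla's filtration along the Siegel-type parabolic of $G(W^\square)$ and Proposition \ref{P:ds} for the boundary lift, $\Theta_{V,W^\square}(\sigma)$ is nonzero and is a quotient of a standard module $\Ind_P^{G(W^\square)}(\tau_\sigma\otimes\delta_\sigma)$ with $\tau_\sigma$ of strictly positive exponents and $\delta_\sigma$ tempered. Applying $\Hom_{G(W)\times G(W)}(-,\pi\boxtimes\pi'^\vee)$ and the geometric lemma for the symmetric pair $(G(W^\square),G(W)\times G(W))$, the positivity of the exponents of $\tau_\sigma$ kills all but the closed orbit, where Frobenius reciprocity and a comparison of cuspidal supports force $\pi'\cong\pi$; this yields the injectivity statement, and also identifies $\pi$ as the Langlands quotient of a standard module read off from $\sigma$. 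Irreducibility of $\theta(\pi)$ then follows: by semisimplicity it is a finite sum of tempered $\sigma_i$, each satisfying $\pi=L(\mathrm{Std}(\sigma_i))$ for the standard module $\mathrm{Std}(\sigma_i)$ just described, so by the rigidity of the Langlands classification all the $\mathrm{Std}(\sigma_i)$ coincide; tracing this back through Kudla's filtration (where the tempered datum of $\mathrm{Std}(\sigma_i)$ is a first-occurrence lift of $\sigma_i$ and the injectivity for that smaller pair applies inductively) forces all the $\sigma_i$ to coincide.

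The case $m>2(n+\epsilon_0)$ is the mirror image: $\Theta_{W,V}(\pi)$ is now a large going-up lift, nonzero by the conservation relation, and Kudla's filtration — with $m>2(n+\epsilon_0)$ ensuring that the boundary theta lift it produces falls in the going-down range of Proposition \ref{P:ds} — exhibits $\Theta_{W,V}(\pi)$ itself as a quotient of a standard module with strictly positive exponents, hence $\theta(\pi)$ irreducible; injectivity follows by the same see-saw after descending $\sigma=\theta(\pi)$ to $G(W)$. I expect the main obstacle to be the non-circular extraction of the standard-module description: identifying precisely which pieces of Kudla's filtration survive, verifying the \emph{positivity} of the $\GL$-exponents (which is where the hypotheses $m\le n+\epsilon_0$ and $m>2(n+\epsilon_0)$, as opposed to the intermediate range, are essential), and organizing the whole argument as a single induction over the dual pairs so that the Mackey analysis and the appeals to injectivity for smaller pairs never presuppose the Howe duality being proved while $\pi'$ ranges over all irreducible representations.
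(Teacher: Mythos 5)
Your overall strategy — the doubling see-saw for $W^\square=W\oplus W^-$ together with the Kudla--Rallis filtration — is the right family of tools and is what the paper uses, but the see-saw you write down is not the one the paper needs, and this is a genuine gap. The see-saw identity for the pair $\bigl(G(W^\square), H(V)\times H(V)\bigr)$ with subgroups $G(W)\times G(W^-)$ and $H(V)^\Delta$ lifts the \emph{splitting character} $\chi_W$ (essentially the trivial representation of $H(V)^\Delta$ up to twist), not $\sigma$. It reads
\[
\Hom_{G(W)\times G(W)}\bigl(\Theta_{V,W^\square}(\chi_W),\,\pi'\otimes\pi^\vee\chi_V\bigr)\;=\;\Hom_{H(V)^\Delta}\bigl(\Theta(\pi')\otimes\Theta(\pi)^{\mathrm{MVW}},\,\CC\bigr),
\]
and the right-hand side is the one that detects a common constituent $\sigma$. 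Your $\Theta_{V,W^\square}(\sigma)$ is a different object and does not arise from this see-saw; consequently the downstream appeals to ``Kudla's filtration for $\Theta_{V,W^\square}(\sigma)$'' and to Proposition~\ref{P:ds} for this lift do not connect to what is to be proved. The point of lifting $\chi_W$ is precisely that Rallis' theorem (\ref{E:Rallis}) and its surjective counterpart (\ref{E:surjection}) identify $\Theta_{V,W^\square}(\chi_W)$ with (a quotient of, or equal to) the degenerate principal series $\Ind_{P(\Delta W)}^{G(W^\square)}\chi_V|\det|^{\pm s_{m,n}}$, which is a concrete and fully understood object.

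Once the correct object is in hand, the paper's proof is considerably lighter than what you propose: Lemma~\ref{L:key} gives a $G(W)\times G(W)$-equivariant filtration of that degenerate principal series with bottom layer $R_0 = (\chi_V\circ\det)\otimes C_c^\infty(G(W))$ and higher layers $R_t$ ($t>0$) carrying $\GL_t$-exponents $-s_{m,n}+t/2>0$ in the range $m\le n+\epsilon_0$. Frobenius reciprocity and Casselman's criterion applied to the tempered $\pi$ then kill $\Hom(R_t,\pi'\otimes\pi^\vee\chi_V)$ for all $t>0$, so the restriction map to $\Hom(R_0,\pi'\otimes\pi^\vee\chi_V)$ is injective, and $R_0=C_c^\infty(G(W))$ immediately gives multiplicity $\le 1$ with equality only if $\pi\cong\pi'$. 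No geometric lemma for a symmetric pair, no rigidity of Langlands data, no Proposition~\ref{P:ds}, no conservation relation, and no induction over dual pairs are needed. Indeed it is important that Theorem~\ref{T:temp} be proved \emph{without} Proposition~\ref{P:ds}, because the paper later uses Theorem~\ref{T:temp} (via Proposition~\ref{P:rob}) in the intermediate range together with Proposition~\ref{P:ds}; your proposal would make the logic circular. The case $m>2(n+\epsilon_0)$ is handled identically, since there $s_{m,n}$ is large enough that the degenerate principal series is irreducible so that (\ref{E:Rallis}) is an equality.

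Two smaller remarks. First, for temperedness the paper just cites the Casselman-criterion computation of \cite[Prop.~C.1, C.4(i)]{gi}; your proposed argument via unitary induction tries to prove the stronger claim that $\Theta(\pi)=\theta(\pi)$ is semisimple, which is not asserted in Theorem~\ref{T:temp} (it is only established in the almost equal rank range, Lemma~\ref{L:equal}), and the step from ``quotient of a unitary induced representation'' to ``semisimple'' is not automatic. Second, the nonvanishing via the conservation relation in the going-up range is not needed: the theorem only asserts ``zero or irreducible''.
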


 \vskip 5pt
 
We consider the following see-saw diagram
\[ 
\xymatrix{ G(W \oplus W^-)\ar@{-}[d]_{}\ar@{-}[dr]&H(V)\times
H(V)\ar@{-}[d]\\ G(W)\times G(W^-)\ar@{-}[ur]&H(V)^{\Delta}, }
\] 
where $W^{-}$ denotes the space obtained from $W$ by multiplying
the form by $-1$, so that $G(W^-) = G(W)$.  
Given an irreducible tempered representation $\pi$ and any irreducible
representation $\pi'$ of $G(W)$,  the see-saw identity
\cite[\S 6.1]{gi} gives:
\[ 
\Hom_{G(W) \times G(W)} ( \Theta_{V, W + W^-}(\chi_W), \pi' \otimes
\pi^{\vee}\chi_V) = \Hom_{H(V)^\Delta}( \Theta(\pi') \otimes
\Theta(\pi)^{MVW}, \CC), 
\] 
where MVW refers to the involution on the
set of smooth representations of $H(V)$ introduced in \cite{mvw}.
Here $\Theta_{V,W + W^-}(\chi_W)$ denotes the big theta lift of the
character $\chi_W$  of $H(V)$ to $G(W+ W^-)$. It is a result of
Rallis that
\begin{equation}\label{E:Rallis}
\Theta_{V, W + W^-}(\chi_W) \hookrightarrow {\rm Ind}_{P(\Delta
W)}^{G(W+W^-)} \chi_V |\det|^{s_{m,n}} 
\end{equation}
where
\begin{enumerate}[$\bullet$]
\item $\Delta W \subset W + W^-$ is diagonally embedded and is a
maximal isotropic subspace;
\item $P(\Delta W)$ is the maximal parabolic subgroup of $G(W+ W^-)$
which stabilizes $\Delta W$ and  has Levi factor $\GL(\Delta W)$;
\item ${\rm Ind}_{P(\Delta W)}^{G(W+W^-)} \chi_V |\det|^{s}$ denotes
the degenerate principal series representation induced from the
character $\chi_V|\det|^s$ of $P(\Delta W)$ (normalized
induction);
\item moreover,
\[ s_{m,n} = \frac{m-(n+\epsilon_0)}{2}. \]
 \end{enumerate} \vskip 5pt

We consider the two cases in turn.  \vskip 5pt

\noindent{\bf \underline{Case 1: $m \leq n+ \epsilon_0$}.}  \vskip 5pt

We  first note that one can prove the temperedness of  $\theta(\pi)$ 
(if nonzero) in the same way as in
\cite[Prop. C.1 and Prop. C.4(i)]{gi}. Hence, we will focus on the rest of the theorem.
\vskip 5pt

In this case, $s_{m,n} \leq 0$ and there is a surjective map (see
\cite[Prop. 8.2]{gi})
\begin{equation}\label{E:surjection}
{\rm Ind}_{P(\Delta W)}^{G(W+W^-)}\chi_V |\det|^{-s_{m,n}}
\longrightarrow \Theta_{V, W + W^-}(\chi_W). 
\end{equation}
Hence the see-saw
identity gives:
\[ \Hom_{G(W) \times G(W)}( {\rm Ind}_{P(\Delta W)}^{G(W+W^-)} \chi_V
|\det|^{-s_{m,n}}, \pi' \otimes \pi^{\vee}\chi_V) \supset \Hom_{H(V)}
(\theta(\pi') , \theta(\pi)).  \] 
To prove the theorem, it suffices to
show that the LHS has dimension $\leq 1$, with equality only if $\pi =
\pi'$.  \vskip 5pt

For this, we need the following crucial lemma (see \cite{kr05}), which
we shall state in slightly greater generality here for later use.  Let
$W' = W + \mathbb{H}^r$ where $\mathbb{H}$ is the hyperbolic plane
(i.e.\ the split $-\epsilon$-Hermitian space of dimension 2), so
that $n' := \dim W' = n + 2r$.  Consider the split space $\mathbb{W} =
W' + W^-$.  For a maximal isotropic subspace $Y$ of $\mathbb{H}^r$,
the space $\Delta W \oplus Y$ is a maximal isotropic subspace of
$\mathbb{W}$, whose stabilizer in $G(\mathbb{W})$ is a maximal
parabolic subgroup $P$. Now we have: \vskip 5pt

\begin{lemma} \label{L:key} 
As a representation of $G(W') \times
G(W^-)$, $ {\rm Ind}_{P}^{G(\mathbb{W})} \chi_V \cdot |\det|^{s}$
possesses an equivariant filtration
\[ 0 \subset I_0 \subset I_1 \subset\cdots\subset I_q = {\rm
Ind}_{P}^{G(\mathbb{W})} \chi_V \cdot |\det|^{s} \] with successive
quotients
\begin{align}
 R_t &= I_t / I_{t-1}   \notag \\
 &= {\rm Ind}_{Q'_{t+r} \times Q_t}^{G(W') \times
G(W^-)} \Big( \big(\chi_V |{\det}_{X_{t+r}}|^{s + \frac{t}{2}} \boxtimes
\chi_V |{\det}_{X_t}|^{s+\frac{t}{2}} \big) \otimes\\
&\hspace{2in} (\chi_V \circ
{\det}_{W^-_{n-2t}})\otimes  C^{\infty}_c( G(W_{n-2t})) \Big). \notag 
\end{align} 
Here, the induction is normalized and
\begin{enumerate}[$\bullet$]
\item $q$ is the Witt index of $W$;
\item $Q_t$ is the maximal parabolic subgroup of $G(W)$ stabilizing a
$t$-dimensional isotropic subspace $X_t$ of $W$, with Levi subgroup
$\GL(X_t) \times G(W_{n-2t})$, where $\dim W_{n-2t} = n-2t$.
\item $Q'_{t+r}$ is the maximal parabolic subgroup stabilizing the
$(t+r)$-dimensional isotropic subspace $X_t + Y$ of $W'$ with Levi
factor $\GL ( X_t + Y) \times G(W_{n-2t})$.
\item $G(W_{n-2t})\times G(W_{n-2t})$ acts on $C^{\infty}_c(
G(W_{n-2t}))$ by  left-right translation.
\end{enumerate} 
In particular,
\[ R_0 =  {\rm Ind}_{Q' (Y) \times G(W^-)}^{G(W') \times G(W^-)} \chi_V
|{\det}_Y|^s \otimes (\chi_V \circ {\det}_{W^-}) \otimes C^{\infty}_c(G(W)). \]
\end{lemma}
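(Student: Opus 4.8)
The plan is to realize the degenerate principal series geometrically and then stratify its model by $G(W')\times G(W^-)$-orbits. Concretely, ${\rm Ind}_P^{G(\mathbb{W})}\chi_V\cdot|\det|^s$ is the space of smooth functions on $G(\mathbb{W})$ transforming under $P$ on the left; restricting to the subgroup $G(W')\times G(W^-)$, the relevant combinatorial data is the set of $P$-orbits on $G(\mathbb{W})/(G(W')\times G(W^-))$, equivalently the $(G(W')\times G(W^-))$-orbits on the isotropic Grassmannian $G(\mathbb{W})/P$ of maximal isotropic subspaces $Z$ of $\mathbb{W}=W'+W^-$. The key point, going back to the computation in \cite{k83} and systematized in \cite{kr05}, is that such an orbit is classified by the pair of integers $(a,b)$ where $a=\dim(Z\cap W')$ and $b=\dim(Z\cap W^-)$, together with (a Witt-equivalence class of) the ``graph'' isometry $W'/(Z\cap W') \supset \pi_{W'}(Z) \xrightarrow{\sim} \pi_{W^-}(Z) \subset W^-/(Z\cap W^-)$. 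Since $W'$ and $W^-$ are $-\epsilon$-Hermitian spaces with $W^-\cong W^{-}$ (same space, form negated), the graph part forces an isometric identification of a subspace of $W'$ with a subspace of $W$; because $\dim W' = \dim W + 2r$ and we track the maximality constraint $a+b+\dim(\text{graph part}) = \dim W' = n'$, the orbits are indexed by a single parameter, which one sets up so as to match the index $t$ in the statement.

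The steps I would carry out, in order: (1) Parametrize the $(G(W')\times G(W^-))$-orbits on $G(\mathbb{W})/P$ as above, and order them so that orbit closures are compatible with increasing the invariant $t = n' - a - (\dim \text{graph})$, i.e. the ``most degenerate'' orbit (largest $Z\cap W^-$, equivalently $b=n$ since $W^-$ has Witt index $n$) is closed and gives the open piece $I_0$ of the filtration, while the generic orbit (where $Z$ projects isomorphically, corresponding to $t=q$ the Witt index of $W$) is open and gives the top quotient $R_q$. (2) On each orbit compute the isotropy group inside $G(W')\times G(W^-)$: stabilizing the flag $Z\cap W'\subset W'$ and $Z\cap W^-\subset W^-$ produces the parabolics $Q'_{t+r}\subset G(W')$ (stabilizing $X_t+Y$, of dimension $t+r$) and $Q_t\subset G(W^-)$ (stabilizing $X_t$), while the graph isometry between the two $G(W_{n-2t})$-quotients gives the diagonally embedded $G(W_{n-2t})$ and hence the factor $C_c^\infty(G(W_{n-2t}))$ by the standard $\mathrm{Ind}_{H^\Delta}^{H\times H}\mathbf{1} = C_c^\infty(H)$ identification. (3) Track the modulus characters: the shift $|\det|^{s}\mapsto |\det_{X_{t+r}}|^{s+t/2}\boxtimes|\det_{X_t}|^{s+t/2}$ and the twist $\chi_V\circ\det_{W^-_{n-2t}}$ come out of comparing $\delta_P$ restricted to the isotropy group with the product $\delta_{Q'_{t+r}}\delta_{Q_t}$, a bookkeeping exercise with the half-sums of roots on the $\GL$-blocks. (4) Assemble: $I_t$ is the subspace of functions supported on the union of the first $t+1$ orbits, which is open in the union of the first $t+1$-plus-one orbits, so the successive quotients $R_t = I_t/I_{t-1}$ are exactly the compactly-supported sections over the locally closed orbit of index $t$, giving the displayed formula; specializing to $t=0$ (take the closed orbit, $X_0=0$, so $Q'_r$ stabilizes $Y$ and $Q_0 = G(W^-)$) yields the stated $R_0$.

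The main obstacle is step (3): getting the normalizations of the characters exactly right — the precise exponent $s+\tfrac{t}{2}$ on each $\GL$-factor and the appearance of $\chi_V\circ\det_{W^-_{n-2t}}$ rather than some other twist — requires a careful computation of the modulus character of $P$ restricted to the various isotropy subgroups and its comparison with $\delta_{Q'_{t+r}}^{1/2}$ and $\delta_{Q_t}^{1/2}$ under normalized induction. The orbit-stratification geometry in step (1)–(2) is essentially the Witt-theory/see-saw computation already present in \cite{k83} and \cite{kr05}, so I would invoke those references for the combinatorial backbone and concentrate the new work on verifying that the filtration is $G(W')\times G(W^-)$-stable (continuity of the restriction-to-orbit maps, which is automatic since orbits are locally closed in a variety with finitely many orbits) and on the character bookkeeping. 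One should also check the small point that the generic orbit really is open and the orbit with $b=n$ really is closed, which follows from semicontinuity of $Z\mapsto \dim(Z\cap W^-)$ and the rank count $a+b\le n'$.
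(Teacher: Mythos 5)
The paper does not prove this lemma itself --- it is quoted from Kudla--Rallis \cite{kr05} (in a mild generalization with the parameter $r$) --- and your orbit-stratification plan is indeed the approach that underlies the computation in that reference and in \cite{mvw}. So the skeleton is right: stratify $G(\mathbb{W})/P$ into $G(W')\times G(W^-)$-orbits, compute isotropy groups to get the parabolics and the $C_c^\infty(G(W_{n-2t}))$ factor via $\mathrm{Ind}_{H^\Delta}^{H\times H}\mathbf{1} \cong C_c^\infty(H)$, and then track modulus characters.

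However, you have the geometry of the stratification backwards, and your own text is internally inconsistent about it. For smooth induction, the Bruhat filtration of the restriction has the open orbit at the bottom: $I_0 = R_0$ consists of sections compactly supported on the open orbit, and the top quotient $R_q$ is the restriction to the closed orbit. Comparing with the lemma's formula for $R_0$ (where $Q_0 = G(W^-)$, so $X_0 = 0$, so $Z \cap W^- = 0$), the orbit giving $R_0$ is the one with $Z \cap W^- = 0$; and this is the \emph{open} orbit, since for $Z$ of dimension $n+r$ and $W^-$ of dimension $n$ inside $\mathbb{W}$ of dimension $2n+2r$, the generic intersection is trivial and $\dim(Z\cap W^-) > 0$ is a closed condition. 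Your step (1) asserts that the closed orbit, with $\dim(Z\cap W^-)$ maximal, gives $I_0$ --- that is the reverse of the truth --- and your step (4) says ``take the closed orbit, $X_0 = 0$'', which contradicts both the correct geometry and the rule you state in the same sentence (that $I_t$ is open in $I_{t+1}$, forcing the $t=0$ stratum to be open). Separately, ``$b = n$ since $W^-$ has Witt index $n$'' is false: $W^-$ is $W$ with the form negated, so its Witt index is $q$ (the same as $W$), which is at most $n/2$. The correct indexing is $t = \dim(Z\cap W^-)$, running from $0$ (open orbit, bottom of filtration) to $q$ (closed orbit, top quotient). With these corrections, the remainder of your outline --- the isotropy computation and the modulus-character bookkeeping in step (3) --- is where the genuine work lies, as you say.
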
 

\vskip 5pt 

We shall apply this lemma with $W' = W$, in which
case $R_0 = (\chi_V \circ {\det}_{W^-}) \otimes C^{\infty}_c(G(W))$.
Then we claim that the natural restriction map
\begin{align*} 
&\Hom_{G(W) \times G(W)}( {\rm Ind}_{P(\Delta W)}^{G(W+W^-)}
|\det|^{-s_{m,n}}, \pi' \otimes \pi^{\vee}\chi_V ) \\
&\hspace{2in}\longrightarrow \Hom_{G(W)
\times G(W)}( R_0, \pi' \otimes \pi^{\vee}\chi_V)
\end{align*}
 is injective. This will
imply the theorem since the RHS has dimension $\leq 1$, with equality
if and only if $\pi = \pi'$.  \vskip 5pt

To deduce the claim, it suffices to to show that for each $0 < t \leq q$,
\[ \Hom_{G(W) \times G(W)}( R_t , \pi' \otimes \pi^{\vee}\chi_V) = 0. \] 
By
Frobenius reciprocity, $\Hom_{G(W) \times G(W)}( R_t , \pi' \otimes
\pi^{\vee}\chi_V)$ is equal to
\begin{align*} 
&\Hom_{L(X_t) \times L(X_t)} \Big( \big( \chi_V |\det|^{-s_{m,n} +
\frac{t}{2}} \boxtimes\chi_V |\det|^{-s_{m,n}+\frac{t}{2}} \big)
\otimes C^{\infty}_c( G(W_{n-2t})),\\
&\hspace{150pt} R_{\overline{Q}_t}(\pi') \otimes R_{\overline{Q}_t} (\pi^{\vee})\Big)
\end{align*}
where
 \[ 
L(X_t) = \GL(X_t) \times G(W_{n-2t})
\]
is the Levi factor of
$Q_t$.  Here and elsewhere $R_{\overline{Q}_t}$ indicates the
normalized Jacquet functor with respect to $\overline{Q}_t$. But $-s_{m,n} +
\frac{t}{2} > 0$ whereas, since $\pi$ is
tempered, it follows by Casselman's criterion that the center of
$\GL_t$ acts on any irreducible subquotient of $R_{\overline{Q}_t}
(\pi^{\vee})$ by a character of the form $\mu \cdot |-|^{\alpha}$ with
$\mu$ unitary and $\alpha \leq 0$.  Hence we deduce that the above Hom
space is $0$, as desired, and we have proved Theorem \ref{T:temp} when
$m \leq n+ \epsilon_0$.  

\vskip 10pt

\noindent{\bf \underline{Case 2: $m > 2(n+ \epsilon_0)$}.}

\vskip 5pt

In this case, $s_{m,n} >0$ is so large that the degenerate principal
series representation ${\rm Ind}_{P(\Delta W)}^{G(W+ W^-)} \chi_V
\cdot |\det|^{s_{m,n}}$ is irreducible \cite[Proposition 7.1]{gi}, and hence
\[ 
\Theta_{V, W+W^-}(\chi_W) = {\rm Ind}_{P(\Delta W)}^{G(W+ W^-)}
\chi_V|\det|^{s_{m,n}}. 
\] 
The same argument as in Case 1
completes the proof of Theorem \ref{T:temp}.  

\vskip 10pt

\vskip 10pt

\section{\bf  Proof of Theorem \ref{T:main}}

In this section, we complete the proof of
Theorem \ref{T:main}. In view of Theorem \ref{T:temp}, it remains to consider the  case $n+ \epsilon_0 < m
\leq 2(n+\epsilon_0)$.  For this case, we consider the theta lift of $\pi$ to the Witt tower $\{
V_{m'} \}$ of spaces containing $V = V_m$.  The following proposition
is the key technical result that we use  (see \cite[Theorems 4.1 and
4.2]{M1} and  \cite[Theorem 4.2]{rob}): 
\vskip 5pt

 \vskip 5pt

\begin{prop}  \label{P:ds}
Assume $m>n+ \epsilon_0$. For tempered $\pi$, any irreducible quotient
$\sigma$ of $\Theta_{W,  V_m}(\pi)$ is either tempered or is the
Langlands quotient of a standard module
\[  {\rm Ind}_P^{H(V_m)} \tau_1\cdot \chi_W | \det |^{s_1} \otimes\cdots\otimes
\tau_k \cdot \chi_W | \det |^{s_k} \otimes \sigma' \]
with  $\tau_i$ a unitary discrete series representation of some
$\GL_{n_i}$, $\sigma'$ a tempered representation of   $H(V_{m'})$
with $m' = m - 2\sum_i n_i$, and
\[  s_1 \geq s_2 \geq\cdots\geq s_k  > 0,\] 
satisfying:
\begin{itemize}
\item[(a)]  $\tau_1= {\bf 1}$ (so $n_1=1$) and $s_1  = \frac{m- (n+\epsilon_0) -1}{2}$, or
\item[(b)]  $\tau_1 = {\rm St}_{n_1}$ and $s_1  =
  \frac{1}{2}$, where ${\rm St}_{n_1}$ denotes  the Steinberg
  representation of $\GL_{n_1}$ with $n_1 = m- (n+\epsilon_0)-1  > 1$.
\end{itemize}
Indeed, (a)  could hold only if $m > n + \epsilon_0 +1$ and
$\Theta_{W, V_{m-2}}(\pi) \ne 0$, and (b) could hold only if  $m >
n+\epsilon_0 +2$ and the square-integrable support of $\pi$  contains
a (twisted) Steinberg representation $\chi_V \cdot {\rm St}_{n_1-1}$.
\end{prop}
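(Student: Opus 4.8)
The plan is to study the structure of $\Theta_{W,V_m}(\pi)$ via the Jacquet-module machinery developed in \cite{kr05}, exactly as in the proof of Theorem \ref{T:temp}. Let $\sigma$ be an irreducible quotient of $\Theta_{W,V_m}(\pi)$, and assume $\sigma$ is not tempered. By the Langlands classification, $\sigma$ is the Langlands quotient of a standard module $\operatorname{Ind}_P^{H(V_m)} \tau_1 |\det|^{s_1} \otimes \cdots \otimes \tau_k |\det|^{s_k} \otimes \sigma'$ with the $\tau_i$ unitary discrete series of general linear groups, $\sigma'$ tempered, and $s_1 \geq \cdots \geq s_k > 0$. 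The first task is to constrain the leading exponent $s_1$ and the leading segment $\tau_1$. Twisting out by $\chi_W$ everywhere, one computes the Jacquet module $R_{\overline{P}_1}(\sigma)$ along the maximal parabolic $P_1$ of $H(V_m)$ with Levi $\operatorname{GL}_{n_1} \times H(V_{m - 2n_1})$: by the theory of the Langlands quotient, $\tau_1 \chi_W |\det|^{s_1}$ appears as the $\operatorname{GL}_{n_1}$-part of some irreducible subquotient, and $s_1$ is the largest exponent occurring. On the other hand, exactness of the Jacquet functor and the surjection $\Theta_{W,V_m}(\pi) \twoheadrightarrow \sigma$ give a surjection $R_{\overline{P}_1}\Theta_{W,V_m}(\pi) \twoheadrightarrow R_{\overline{P}_1}(\sigma)$, and $R_{\overline{P}_1}\Theta_{W,V_m}(\pi)$ is computed by Kudla's filtration: its irreducible subquotients have $\operatorname{GL}_{n_1}$-part of a very restricted shape, built out of (twisted Steinberg) segments coming from the square-integrable support of $\pi$ together with the ``extra'' contribution $\chi_W|\det|^{(m - (n+\epsilon_0) - 1)/2}$ (or a Steinberg-twisted version thereof) arising from the degenerate principal series appearing in the relevant doubling computation. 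This is precisely the content of \cite[Theorems 4.1, 4.2]{M1} and \cite[Theorem 4.2]{rob}, whose relevant parts are reproved in \cite[Prop. 8.1]{gs} and \cite[Prop. C.1]{gi}; I would cite and lightly extend that computation.

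Once the shape of the leading term is pinned down, the dichotomy (a)/(b) falls out: matching the leading exponent of $R_{\overline{P}_1}(\sigma)$ against the list produced by Kudla's filtration forces either $\tau_1 = \mathbf{1}$ with $s_1 = \frac{m - (n+\epsilon_0) - 1}{2}$ (the contribution from the ``first-occurrence'' character, which can only occur if there is room, i.e. $m > n + \epsilon_0 + 1$, and if the theta lift one step down $\Theta_{W, V_{m-2}}(\pi) \neq 0$), or $\tau_1 = \operatorname{St}_{n_1}$ with $s_1 = \frac{1}{2}$, where $n_1 = m - (n + \epsilon_0) - 1$; in the latter case the Steinberg segment can only be ``peeled off'' if the square-integrable support of $\pi$ already contains $\chi_V \cdot \operatorname{St}_{n_1 - 1}$, and one needs $n_1 > 1$, i.e. $m > n + \epsilon_0 + 2$. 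The ``indeed'' clauses — that (a) forces $\Theta_{W,V_{m-2}}(\pi) \neq 0$ and that (b) forces a twisted Steinberg in the square-integrable support of $\pi$ — are read off from which branch of Kudla's filtration $R_t$ (with $t = n_1$) actually supports a nonzero map: a nonvanishing Hom against $R_t$ forces the bottom piece $C_c^\infty(G(W_{n - 2t}))$ to pair nontrivially with a Jacquet module of $\pi$, i.e. forces the appropriate theta lift or square-integrable datum to be nonzero. Combined with the known fact (\cite[Theorem 4.1]{M1}, reproved in Proposition \ref{P:ds} style in \cite[Prop. C.1]{gi}) that discrete-series theta lifts in the range $m \leq n + \epsilon_0$ are themselves discrete series, this gives the full statement.

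The main obstacle, I expect, is bookkeeping: one must carefully track the normalization of the induction, the modulus characters, and the twist by the splitting character $\chi_W$ through Kudla's filtration, and match exponents on the nose. There is also a subtlety in ensuring that after ``peeling off'' the leading segment $\tau_1 |\det|^{s_1}$, the remainder is genuinely of the asserted inductive form so that one may recurse; this is where one uses that the Jacquet module of $\Theta_{W,V_m}(\pi)$ along $\overline{P}_1$ has, in its top Kudla-filtration layer, a quotient of the form $(\text{the peeled segment}) \otimes \Theta_{W, V_{m-2n_1}}(\pi)$, so that the ``residual'' theta lift $\Theta_{W, V_{m - 2n_1}}(\pi)$ is what produces $\sigma'$, and one invokes Theorem \ref{T:temp} or the already-established low-rank cases to conclude $\sigma'$ is tempered. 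Everything else is a direct application of Casselman's criterion and Frobenius reciprocity, as in Section 2.
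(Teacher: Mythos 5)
Your proposal follows essentially the same route as the paper: embed $\sigma$ into (the contragredient of) its standard module, pull back along the surjection $\omega \twoheadrightarrow \pi \otimes \sigma$, apply Frobenius reciprocity, and analyze the resulting $\Hom$ space using Kudla's filtration of $R_Q(\omega_{V_m,W})$ together with Casselman's temperedness criterion — exactly as in \cite[Prop.\ C.1]{gi}, which the paper cites as its model. However, the proposal remains a sketch precisely where the substance lies: the dichotomy (a)/(b) and the two ``indeed'' clauses do not simply ``fall out of matching exponents'' but come from a case-by-case analysis of which layer $J^a$ ($0 \le a \le t = n_1$) of Kudla's filtration can support the nonzero $\Hom$. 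The case $a = t$ is ruled out by Casselman's criterion ($s_1 > 0$ contradicts temperedness of $\pi$); $a = 0,\, t = 1$ forces $\tau_1 = \mathbf{1}$, $s_1 = \tfrac{m - (n+\epsilon_0) - 1}{2}$ and $\Theta_{W,V_{m-2}}(\pi) \ne 0$, yielding (a); and $0 < a < t$, $t > 1$ forces, via the central character of $\GL(Y_{t-a})$ on $J^a$ and Zelevinsky's description of Jacquet modules of discrete series, that $t - a = 1$, $\tau_1 = {\rm St}_t$, $s_1 = \tfrac12$, and that $\chi_V\cdot{\rm St}_{t-1}$ lies in the square-integrable support of $\pi$, yielding (b). You should also drop the last paragraph's attempt to ``recurse'' and conclude temperedness of $\sigma'$: the temperedness of $\sigma'$ is part of the Langlands-classification data of $\sigma$ and needs no argument, and the proposition does not identify $\sigma'$ with a lower theta lift of $\pi$.
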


 \vskip 5pt

\noindent  Here, by the square-integrable support of a tempered
representation $\pi$, we mean the (unique up to association) set $\{
\tau_1,\dots,\tau_r, \pi'\}$
of essentially square-integrable representations  such that
$\pi$ is contained in the representation parabolically induced from
the representation $\tau_1 \boxtimes \cdots\boxtimes \tau_r \boxtimes \pi'$ of a Levi
subgroup $\GL_{n_1} \times\cdots\times \GL_{n_r}  \times G(W')$ of $G(W)$.  
\vskip 5pt

So as not to disrupt the proof of Theorem \ref{T:main}, we postpone
the proof of this proposition to the last section.
We also need the following refinement of a result of Roberts \cite[Theorem 4.4]{rob};
we include the proof so as to cover all the dual pairs considered
here.  \vskip 5pt

\begin{prop} \label{P:rob}
Let $\pi$ be an irreducible representation of $G(W)$ and let $V_{d'}
=  V_d \oplus \mathbb{H}^r$, where $\mathbb{H}$ is the split
$\epsilon$-Hermitian space of  dimension 2, so that $d'  \geq  d \geq
n+\epsilon_0$. Suppose that
\[   \sigma  \subset \theta_{W, V_d}(\pi)  \quad \text{and} \quad
\sigma'  \subset \theta_{W, V_{d'}}(\pi)\]
are irreducible representations. 
 If $d'  = d +2r$ is sufficiently large, then $\sigma'$ is a quotient of the representation
\[  
\Ind_Q^{H(V_{d'})}(\chi_W |-|^{\frac{d'   - (n+\epsilon_0)-1}{2}}\otimes\chi_W
|-|^{\frac{d'-(n+\epsilon_0) -3}{2}}\otimes\cdots\otimes\chi_W
|-|^{\frac{d - (n+\epsilon_0)+1}{2}}\otimes\sigma) \]
induced from the parabolic subgroup $Q$ with Levi factor $(\GL_1)^{r}  \times H(V_d)$.
\vskip 5pt

Furthermore, if $\sigma$ is tempered, then the above conclusion holds
for all $ r\geq 0$, in which case the above induced representation is a standard
module and $\sigma'$ is its unique Langlands quotient. 
\end{prop}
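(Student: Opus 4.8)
The plan is to use the Rallis-type filtration of Lemma~\ref{L:key} together with a see-saw argument relating the theta lift to $V_{d'}$ and the theta lift to $V_d$. Concretely, set $\mathbb{V} = V_{d'} \oplus V_d^-$ (a split space), and consider the see-saw
\[
\xymatrix{ H(V_{d'} \oplus V_d^-)\ar@{-}[d]\ar@{-}[dr] & G(W)\times G(W)\ar@{-}[d]\\ H(V_{d'})\times H(V_d^-)\ar@{-}[ur] & G(W)^{\Delta}, }
\]
Given $\sigma \subset \theta_{W,V_d}(\pi)$ and $\sigma' \subset \theta_{W,V_{d'}}(\pi)$, a nonzero map $\sigma \otimes \sigma'{}^{\vee}\chi_W \to \omega$-coinvariants produces, via the see-saw identity, a nonzero $G(W)^\Delta$-invariant functional on $\Theta_{W \oplus W^-, V_d}(\chi_W)$-type object; more directly, I would realize $\sigma'$ as a quotient of $\Theta_{W, V_{d'}}(\pi)$ and use the tower property plus Rallis's embedding~\eqref{E:Rallis} / the filtration of Lemma~\ref{L:key} applied to the degenerate principal series on $H(V_{d'} \oplus V_d^-)$. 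The point of Lemma~\ref{L:key} (with the roles of the two members of the dual pair swapped, which is legitimate since the lemma is symmetric in setup) is that the bottom piece $R_0$ contributes a term involving $C^\infty_c(H(V_d))$, which detects exactly the theta lift to $V_d$, i.e. $\sigma$; the higher pieces $R_t$ contribute induced representations from parabolics of $H(V_{d'})$ whose $\GL$-factors carry the characters $\chi_W|-|^{s}$ with the exponents in the stated arithmetic progression.

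First I would set up the see-saw and extract the statement that $\sigma'$ is a subquotient (then, by chasing quotients, a quotient) of $\Ind_Q^{H(V_{d'})}(\chi_W|-|^{(d'-(n+\epsilon_0)-1)/2} \otimes \cdots \otimes \chi_W|-|^{(d-(n+\epsilon_0)+1)/2} \otimes \sigma)$ provided the relevant higher filtration pieces $R_t$ ($t>0$) do not contribute. This vanishing of higher pieces is where the hypothesis ``$d'$ sufficiently large'' enters: for the pieces $R_t$ with $t>0$, Frobenius reciprocity turns the Hom-space into a Hom-space between Jacquet modules of $\pi$ (which live in a fixed finite set of possible cuspidal exponents, independent of $d'$) and induced modules whose $\GL$-parts carry exponents of size growing with $d'$; once $d'$ is large enough these exponents fall outside the (bounded) range of exponents occurring in $\pi$, forcing the Hom-space to vanish. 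This is essentially the argument of~\cite[Theorem 4.4]{rob}, now carried out uniformly for all the dual pairs in play by invoking Lemma~\ref{L:key}.

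For the ``furthermore'' clause, suppose $\sigma$ is tempered. Then the exponents appearing in the Jacquet modules $R_{\overline{Q}_t}(\sigma)$ (on the $H(V_d)$-side) are all of the form $\mu|-|^{\alpha}$ with $\mu$ unitary and $\alpha \le 0$ by Casselman's criterion, exactly as in the Case~1 argument of Section~2. On the other hand the exponents forced on the $\GL$-factors from the pieces $R_t$ with $t>0$ are strictly positive (the smallest is $(d-(n+\epsilon_0)+1)/2 > 0$, plus further shifts by $t/2$), so positivity of these versus non-positivity of the temperedness exponents already gives the vanishing of $\Hom(R_t,-)$ for all $t>0$ and all $r \ge 0$, with no largeness hypothesis needed. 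Thus $\sigma'$ is a quotient of the full induced representation; and because $\sigma$ is tempered and the exponents $(d'-(n+\epsilon_0)-1)/2 > \cdots > (d-(n+\epsilon_0)+1)/2$ are strictly decreasing and strictly positive, this induced representation is a standard module in the sense of the Langlands classification, so its unique irreducible quotient is its Langlands quotient; since $\sigma'$ is irreducible and is a quotient, $\sigma'$ must be that Langlands quotient.

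The main obstacle I anticipate is the bookkeeping in passing from ``subquotient'' to ``quotient'': the see-saw and the filtration naturally produce $\sigma'$ as a \emph{subquotient} of a degenerate principal series or of the $R_0$-piece, and upgrading this to the precise statement that $\sigma'$ is a \emph{quotient} of the specific induced module in the statement requires carefully tracking which quotients of $\Theta_{W,V_{d'}}(\pi)$ survive after killing the higher filtration steps, and using that $\Theta_{W,V_{d'}}(\pi)$ surjects onto $\sigma'$. A secondary technical point, handled by the same exponent-comparison principle, is making the phrase ``$d'$ sufficiently large'' effective enough to be usable downstream — but since all later applications only need \emph{some} threshold, I would not optimize it.
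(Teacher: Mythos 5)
Your proposal follows essentially the same route as the paper's proof: the see-saw over $H(V_{d'}\oplus V_d^-)$, the Kudla--Rallis filtration of Lemma~\ref{L:key} (with the roles of $W$ and $V$ swapped), an exponent comparison to kill $\Hom(R_t,-)$ for all $t>0$ (for $r$ large in general, and for all $r\ge0$ when $\sigma$ is tempered via Casselman's criterion), and then Frobenius reciprocity on the bottom piece $R_0$. The ``subquotient to quotient'' obstacle you flag does not actually materialize: once $\Hom(R_t,\sigma'\otimes\sigma^\vee\chi_W)=0$ for $t>0$, the restriction map from the full $\Hom$-space to $\Hom(R_0,-)$ is injective (the $R_t$, $t>0$, sit above $R_0=I_0$ in the increasing filtration), and unwinding $\Hom(R_0,\sigma'\otimes\sigma^\vee\chi_W)$ by Frobenius reciprocity directly yields a surjection $\Ind_{Q'(Y)}^{H(V_{d'})}\bigl(\chi_W|\det_Y|^{s_{d+r,n}}\otimes\sigma\bigr)\twoheadrightarrow\sigma'$; this in turn is a quotient of the module displayed in the statement since $\chi_W|\det|^{s_{d+r,n}}$ is the Langlands quotient of the $\GL_r$-principal series with the stated arithmetic progression of exponents.
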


\begin{proof}
 Consider the see-saw diagram
\[
    \xymatrix{
    H(V_{d'} \oplus V_d^-)\ar@{-}[d]_{i}\ar@{-}[dr]&G(W)\times G(W)\ar@{-}[d]\\
    H(V_{d'})\times H(V_d^-)\ar@{-}[ur]&G(W)^{\Delta},
    }
\]
and let
\[  s_{d+r,n}  = \frac{(d+r)  - (n+ \epsilon_0)}{2}  >  0. \]
The see-saw identity  gives:
\begin{align}
0 &\ne \Hom_{G(W)^\Delta}( \Theta(\sigma') \otimes
\Theta(\sigma)^{MVW},\mathbb{C}) \notag \\ 
&= \Hom_{H(V_{d'}) \times H(V_d^-)}(\Theta_{W, V_{d'}+V_d^-}(\chi_V),
\sigma' \otimes \sigma^{\vee}\chi_W) \notag \\ 
&\subseteq   \Hom_{H(V_{d'}) \times H(V_d)}( {\rm Ind}^{H(V_{d'} +
  V_d)}_{P(\Delta V_d + Y)} \chi_W |\det|^{s_{d+r,n}},  \sigma'
\otimes \sigma^{\vee}\chi_W),  \notag 
\end{align}
where we used the analogue of (\ref{E:surjection}) for the last inclusion. Here, note  that $s_{d+r,n}$
is the analogue of $-s_{m,n}$ in \eqref{E:surjection} and 
$Y$ is a maximal isotropic subspace of $\mathbb{H}^r$ so that $\Delta
V_d+Y$ is a maximal isotropic subspace of $V_{d'}+V_d$.
\vskip 5pt

Now we apply Lemma \ref{L:key} (or rather its analogue with the roles of
$W$ and $V$ exchanged), which describes an $H(V_{d'}) \times
H(V_d)$-equivariant filtration of  the induced representation ${\rm Ind}^{H(V_{d'} +
  V_d)}_{P(\Delta V_d + Y)} \chi_W |\det|^{s_{d+r,n}}$. Note that the
length of this filtration depends only on $V_d$ and not on $V_{d'}$.
When $r$ is sufficiently large, all the characters
$\chi_W|\det|^{s_{d+r, n} +\frac{t}{2} }$ which occur in the description of the
successive quotients $R_t$ of this filtration with $t >0$  will be
different from any central exponents of any Jacquet module of $\sigma$
(which is a finite set).  Indeed, this holds for all $r\geq 0$ when
$\sigma$ is tempered, as in the proof of Case 1 of Theorem
\ref{T:temp}.
\vskip5pt

Thus we see that when $r$ is sufficiently large,
\begin{align}
 0 \ne&\Hom_{H(V_{d'}) \times H(V_d)}( {\rm Ind}^{H(V_{d'} +
   V_d)}_{P(\Delta V_d + Y)} \chi_W |\det|^{s_{d+r,n}},  \sigma'
 \otimes \sigma^{\vee}\chi_W)  \notag  \\
 \subseteq 
 &\Hom_{H(V_{d'}) \times H(V_d)} \left(   R_0,  \sigma' \otimes \sigma^{\vee} \chi_W \right) \notag \\
=  &\Hom_{H(V_{d'}) \times H(V_d)} \left(  {\rm Ind}_{Q'(Y) \times H(V_d)}^{H(V_{d'}) \times H(V_d)}
 \chi_W  |{\det}_Y|^{s_{d+r, n}}  \otimes   C^{\infty}_c(H(V_d)),
 \sigma' \otimes \sigma^{\vee} \right) \notag \\
 = &\Hom_{\GL(Y)  \times H(V_d)  \times H(V_d)} \left( \chi_W |{\det}_Y|^{s_{d+r,n}}  \otimes  
 C^{\infty}_c(H(V_d)),  R_{\overline{Q}'(Y)} (\sigma')  \otimes \sigma^{\vee} \right) \notag \\
  = &\Hom_{ \GL(Y) \times H(V_d)} \left(\chi_W  |{\det}_Y|^{s_{d+r,
        n}}  \otimes \sigma , R_{\overline{Q}'(Y)} (\sigma') \right)
  \notag \\
 =  &\Hom_{H(V_{d'})}\left(  {\rm Ind}_{Q'(Y)}^{H(V_{d'})} (\chi_W
   |{\det}_Y|^{s_{d+r, n}} \otimes \sigma)   ,  \sigma'
 \right). \notag 
  \end{align}
  Thus, $\sigma'$ is a quotient of ${\rm Ind}_{Q'(Y)}^{H(V_{d'})}
  (\chi_W |{\det}_Y|^{s_{d+r, n}} \otimes \sigma)$. But the latter is
  a quotient of the induced representation given in the proposition.  
 \vskip 10pt
 
 When $\sigma$ is tempered,  the above conclusions hold for any $r\geq 0$
 and it is clear that the induced representation is a standard
 module. This completes the proof of the proposition.  
 
 \end{proof}

 We can now complete the proof of Theorem \ref{T:main}.
 Suppose that $\sigma_1$ and $\sigma_2$ are both irreducible summands
 of $\theta_{W,V_m}(\pi)$, with $m >  n+ \epsilon_0$.
 In the context of Proposition \ref{P:rob}, we take $d = m$ and $d' =
 m + 2r$ sufficiently large.  Let $\sigma'  = \theta_{W, V_{d'}}(\pi)$
 (which is irreducible for $d'$ sufficiently large by Theorem
 \ref{T:temp}).  By Proposition \ref{P:rob}, we conclude that
 $\sigma'$ is a quotient of 
 \[ \Sigma_i =  \Ind_Q^{H(V_{d'})}(\chi_W |-|^{\frac{d'   - (n+\epsilon_0)-1}{2}}\otimes
 \chi_W |-|^{\frac{d'-(n+\epsilon_0) -3}{2}}\otimes\cdots\otimes\chi_W |-|^{\frac{d
     - (n+\epsilon_0)+1}{2}}\otimes\sigma_i) \]
 for $i =1$ or $2$. 
 Now we claim that $\Sigma_i$  is a standard module. This is clear if
 $\sigma_i$ is tempered.  On the other hand, if $\sigma_i$ is
 nontempered, then Proposition \ref{P:ds} describes two possibilities
 (a) and (b) for $\sigma_i$. In either case, $\sigma_i$ is the
 Langlands quotient of a standard module 
 \[  {\rm Ind}_P^{H(V_m)}( \tau_1 \cdot \chi_W | \det |^{s_1} \otimes\cdots\otimes
 \tau_k \cdot \chi_W | \det |^{s_k} \otimes \sigma_0) \]
 with
 \[  0 < s_1  \leq \frac{ d - (n+\epsilon_0) -1}{2}. \]
 It follows from this that $\Sigma_i$ is a standard module and
 $\sigma'$ is its unique Langlands quotient.
 By the uniqueness of Langlands quotient data, we must have $\sigma_1 \cong \sigma_2$.
 Hence, we conclude that $\theta_{W, V_m}(\pi)$ is
isotypic, and  it follows from Theorem \ref{T:kudla-walds}(ii) that
$\theta_{W, V}(\pi)$ is irreducible for tempered $\pi$.

\vskip 5pt

Finally, suppose that $\theta(\pi_1) \cong \theta(\pi_2) \cong\sigma \ne 0$
for two tempered representations $\pi_1$ and $\pi_2$. Since we are assuming
$n+\epsilon_0+1 \leq m \leq 2(n+\epsilon_0)$, the possibilities for
$\sigma$ are given in Proposition \ref{P:ds}. 
Now take $d' = m +2r$ sufficiently large in Proposition \ref{P:rob}.
If $\sigma_1=\theta_{W, V_{d'}}(\pi_1)$ and $\sigma_2= \theta_{W,
  V_{d'}}(\pi_2)$, then both $\sigma_1$ and $\sigma_2$ will be the
Langlands quotient of  the same standard module 
 \[  \Ind_Q^{H(V_{d'})}(\chi_W |-|^{\frac{d'   -
     (n+\epsilon_0)-1}{2}}\otimes\chi_W |-|^{\frac{d'-(n+\epsilon_0)
     -3}{2}}\otimes\cdots\otimes\chi_W |-|^{\frac{m - (n+\epsilon_0)+1}{2}}
\otimes \sigma),\]
where $Q$ is as in Proposition \ref{P:rob}. This implies that
$\sigma_1\cong \sigma_2$. By Theorem \ref{T:temp}, we deduce that
$\pi_1\cong \pi_2$.

\vskip 10pt

This completes the proof of Theorem \ref{T:main}.

  \vskip 10pt
\section{\bf Proof of Theorem \ref{T:equal}} 

We shall now show Theorem \ref{T:equal} and without loss of
generality, we may assume
that $0 \leq m-(n+\epsilon_0) \leq 1$. By Theorem \ref{T:main}, we already know that
if $\pi$ is tempered, then $\theta(\pi)$ is irreducible tempered or $0$.
Thus it remains to treat the nontempered case. For this, we need the
following lemma which gives more precise control on the big theta lift
of tempered representations.  \vskip 5pt
 
 \begin{lemma}   \label{L:equal}
 Assume that $0 \leq m-(n+\epsilon_0) \leq 1$. If $\pi$ is
tempered, then $\Theta(\pi) = \theta(\pi)$, so that $\Theta(\pi)$ is
irreducible tempered or $0$.
\end{lemma}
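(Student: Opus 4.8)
The plan is to bootstrap from Theorem~\ref{T:main} by means of the doubling see-saw of Section~2. By Theorem~\ref{T:main}, $\theta(\pi)$ is $0$ or irreducible tempered, and since $\Theta(\pi)$ has finite length by Kudla's theorem, $\Theta(\pi)=0$ as soon as $\theta(\pi)=0$; so we may assume $\sigma:=\theta(\pi)\neq 0$. Because $\theta(\pi)$ is by definition the maximal semisimple quotient of $\Theta(\pi)$ and is now known to be irreducible, $\Theta(\pi)$ has a unique maximal subrepresentation $N:=\operatorname{rad}\Theta(\pi)$, with $\Theta(\pi)/N\cong\sigma$; in particular $\Theta(\pi)$ is indecomposable with cosocle $\sigma$, and the lemma comes down to proving $N=0$.

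Next I would bound the relevant doubling $\Hom$-space. Applying the see-saw of Case~1 in the proof of Theorem~\ref{T:temp} with $\pi'=\pi$: when $s_{m,n}\le 0$, the surjection \eqref{E:surjection}, Lemma~\ref{L:key}, and Casselman's criterion for the tempered $\pi$ annihilate every quotient $R_t$ with $t>0$, so the see-saw identity gives
\[
\dim\Hom_{H(V)^{\Delta}}\!\bigl(\Theta(\pi)\otimes\Theta(\pi)^{MVW},\,\CC\bigr)\ \leq\ 1 ,
\]
and since $\sigma^{MVW}\cong\sigma^{\vee}$ the canonical pairing $\sigma\otimes\sigma^{\vee}\to\CC$ pulls back along the cosocle maps to a nonzero element of this space, so it is exactly one-dimensional, spanned by a pairing $\mathcal{B}$ which factors through $\sigma\otimes\sigma^{\vee}$ and hence has left kernel exactly $N$. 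This already covers every $m\le n+\epsilon_0$ (where $s_{m,n}\le 0$ and $\theta(\pi)$ is tempered by Case~1 of Theorem~\ref{T:temp}), so the only remaining case of the lemma is $m=n+\epsilon_0+1$, where $s_{m,n}=\tfrac12>0$ and \eqref{E:surjection} is unavailable; this case I would treat either by passing to the theta correspondence from $H(V)$ to $G(W)$ — which is again almost equal rank, for which the analogue of $s_{m,n}$ is $-\tfrac12\le 0$, and for which $\sigma$ (tempered, by Theorem~\ref{T:main}) plays the role of $\pi$ via the reciprocity $\theta_{V,W}(\sigma)=\pi$ — or by a one-step ``going up'' argument from $m-2=n+\epsilon_0-1$, using Lemma~\ref{L:key} to control the Jacquet modules of $\Theta_{W,V_m}(\pi)$ in terms of the (now irreducible) $\Theta_{W,V_{m-2}}(\pi)$.

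It remains to deduce $N=0$. Suppose $N\neq 0$ and choose an irreducible $\tau\hookrightarrow N\subset\Theta(\pi)$. Applying the MVW functor, which commutes with $\Theta$ and restricts to the contragredient on irreducibles, gives $\tau^{\vee}\hookrightarrow\Theta(\pi)^{MVW}$. The aim is to manufacture from this a second $H(V)^{\Delta}$-invariant functional on $\Theta(\pi)\otimes\Theta(\pi)^{MVW}$ that does \emph{not} annihilate $N\otimes\Theta(\pi)^{MVW}$, contradicting the one-dimensionality found above; $N=0$ then follows, giving $\Theta(\pi)=\sigma$. To produce such a functional I would use the self-duality of the Weil representation in the sharp form identifying $\Theta(\pi)^{\vee}$ with the maximal $\pi$-isotypic \emph{sub}representation of $\omega$: combined with MVW this should turn the submodule $\tau\hookrightarrow\Theta(\pi)$ into a \emph{quotient} of $\Theta(\pi)^{MVW}$, distinct from $\sigma^{\vee}$ if $\tau\not\cong\sigma$, and pairing along that quotient gives the extra functional. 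It is also useful here that, as in the proof of the temperedness of $\theta(\pi)$, every irreducible subquotient of $\Theta(\pi)$ is tempered, together with the injectivity of $\theta$ on tempered representations (Theorem~\ref{T:main}).

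The step I expect to be the main obstacle is the last one: passing from irreducibility of the maximal semisimple quotient $\theta(\pi)$ to irreducibility of the whole big theta lift $\Theta(\pi)$. The delicate point is that $\Theta(\pi)^{\vee}$ is naturally the maximal $\pi$-isotypic subrepresentation of $\omega$ rather than $\Theta(\pi^{\vee})$, so the self-duality of $\omega$ has to be combined with the MVW functor with some care; and it is precisely via the one-dimensionality of the doubling $\Hom$-space — which rests on \eqref{E:surjection}, hence on the smallness of $m$ — that the hypothesis $|m-(n+\epsilon_0)|\le 1$ actually enters the proof.
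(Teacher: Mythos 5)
The paper's own ``proof'' of Lemma~\ref{L:equal} is a citation to \cite[Prop.~8.1(i)--(ii)]{gs} and \cite[Prop.~C.1, C.4(i)]{gi}, where the statement $\Theta(\pi)=\theta(\pi)$ is established by a Jacquet-module analysis in the spirit of Proposition~\ref{P:ds}: one controls the exponents of $R_{\overline Q}(\Theta(\pi))$ via the Kudla filtration and Casselman's criterion, and from this deduces both the temperedness of every subquotient and the collapse of $\Theta(\pi)$ to its cosocle. Your proposed route via the dimension of the doubling $\Hom$-space is a genuinely different strategy, and the first half is fine: the surjection \eqref{E:surjection} plus Lemma~\ref{L:key} plus Casselman's criterion do give $\dim\Hom_{H(V)^{\Delta}}(\Theta(\pi)\otimes\Theta(\pi)^{MVW},\CC)\le 1$ when $s_{m,n}\le 0$.

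The problem is the final deduction, which you flag as the ``main obstacle'' and which in fact does not go through: the one-dimensionality of that $\Hom$-space is \emph{consistent} with $\Theta(\pi)$ being a nontrivial extension, so no contradiction arises from assuming $N\ne 0$. Concretely, suppose $0\to\tau\to\Theta(\pi)\to\sigma\to 0$ is non-split with $\tau\not\cong\sigma$ (both irreducible). Then $\Hom_{H(V)^{\Delta}}(\Theta(\pi)\otimes\Theta(\pi)^{MVW},\CC)\cong\Hom_{H(V)}\bigl(\Theta(\pi),(\Theta(\pi)^{\vee})^{MVW}\bigr)$, and $(\Theta(\pi)^{\vee})^{MVW}$ is a non-split extension $0\to\sigma\to(\Theta(\pi)^{\vee})^{MVW}\to\tau\to 0$ (socle $\sigma$, cosocle $\tau$). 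Since $\Theta(\pi)$ has cosocle $\sigma$ and socle $\tau\not\cong\sigma$, every nonzero map $\Theta(\pi)\to(\Theta(\pi)^{\vee})^{MVW}$ has image contained in the socle $\sigma$ and hence factors through the cosocle, so the $\Hom$-space is still exactly one-dimensional. The submodule $\tau\hookrightarrow\Theta(\pi)$ produces, after $\vee$ and $MVW$, a \emph{quotient} $\tau$ of $(\Theta(\pi)^{\vee})^{MVW}$, but the invariant pairing you hope to build along that quotient would have to be nonzero on the image of $\Theta(\pi)\to\tau$, i.e.\ it would require a surjection $\Theta(\pi)\twoheadrightarrow\tau$, which is exactly what one does not have. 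In short, the see-saw bound controls the cosocle of $\Theta(\pi)$ (which Theorem~\ref{T:main} already does) but says nothing about its radical; to kill the radical one really does need the Jacquet-module computation in \cite{gs,gi}, which shows directly that every irreducible subquotient of $\Theta(\pi)$ is already a quotient. Separately, in the boundary case $m=n+\epsilon_0+1$, the suggestion of ``passing to the correspondence from $H(V)$ to $G(W)$'' only shows $\Theta_{V,W}(\sigma)=\pi$ and does not by itself yield $\Theta_{W,V}(\pi)=\sigma$, so that case would also need the Jacquet-module argument.
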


\vskip 5pt

\begin{proof} This was shown in \cite[Prop. 8.1(i) and (ii)]{gs} and
\cite[Prop. C.1 and Prop. C.4(i)]{gi}.
 \end{proof} 

\vskip 5pt

Now if $\pi$ is nontempered, it can be expressed uniquely as the
Langlands quotient of a standard module 
\begin{equation}\label{E:Lang_quotient}
  {\rm Ind}^{G(W)}_{P_{r_1,\dots,r_k}}  \tau_1\cdot \chi_V  |\det|^{s_1}  \otimes
\tau_2 \cdot \chi_V  |\det|^{s_2}  \otimes\cdots\otimes \tau_k \cdot
\chi_V  |\det|^{s_k}  \otimes\pi_0
\end{equation}
of $G(W)$, where $P_{r_1,\dots,r_k}$ is a parabolic subgroup of $G(W)$
whose Levi factor is $\GL_{r_1} \times\cdots\times \GL_{r_k}  \times
G(W')$, so that 
$\dim W'  =  n - 2\sum_i r_i$, $\pi_0$ is a tempered
representation of $G(W')$, each $\tau_i$ is a unitary tempered
representation of $\GL_{r_i}$ and $s_1 >\cdots> s_k  >0$.   In
\cite[Prop. C.4(ii)]{gi} and \cite[Prop. 8.1(iii)]{gs}, it was shown
that $\Theta(\pi)$ is a quotient of the induced representation
\begin{equation}\label{E:Lang_quotient2}
   {\rm Ind}^{H(V)}_{Q_{r_1,\dots,r_k}}  \tau_1 \cdot \chi_W |\det|^{s_1}  \otimes
\tau_2\cdot \chi_W  |\det|^{s_2}  \otimes\cdots\otimes \tau_k \cdot \chi_W |\det|^{s_k}  \otimes
\Theta_{W', V'}(\pi_0)
\end{equation}
of $H(V)$, where $Q_{r_1,\dots,r_k}$ is the parabolic subgroup of $H(V)$
whose Levi factor is $\GL_{r_1} \times\cdots\times \GL_{r_k}  \times
H(V')$, so that  $\dim V'  =  m - 2\sum_i r_i$. Since $0 \leq \dim V'
- (\dim W' + \epsilon_0) \leq 1$, Lemma \ref{L:equal} implies that
$\Theta_{W', V'}(\pi_0)$ is an irreducible tempered representation, so
that the above induced representation is a standard module of $H(V)$.
In particular, $\theta(\pi)$ is either $0$ or is the unique
Langlands quotient of that standard module.  \vskip 5pt

Finally, assume that $\theta(\pi)\cong\theta(\pi')\neq 0$.  Express  $\pi$ 
as the Langlands quotient of a standard module as in
\eqref{E:Lang_quotient}, though this time we allow the case
$P_{r_1,\dots,r_k}=G(W)$ (in which case $\pi$ is tempered). Similarly express  $\pi'$ as a Langlands quotient, possibly with a
different quotient data. Then by the above argument, $\theta(\pi)$ is the Langlands quotient
of the induced representation \eqref{E:Lang_quotient2}, and similarly
$\theta(\pi')$ is the Langlands quotient of the analogous induced
representation. By the uniqueness of the Langlands quotient data
and Theorem \ref{T:main}, we deduce that $\pi\cong\pi'$.

\vskip 5pt
This completes the proof of Theorem \ref{T:equal} which establishes
the Howe duality conjecture in the (almost) equal rank case.

\vskip 10pt

\section{\bf Proof of Proposition \ref{P:ds}} 

In this section, we give the proof of Proposition \ref{P:ds} following
that of \cite[Prop. C.1]{gi}.

 \vskip 5pt
\begin{enumerate}[$\bullet$]
 
\item Suppose that $\sigma$ is a nontempered irreducible quotient of
  $\Theta_{W, V_{m}}(\pi)$. Suppose that $\sigma$ is the Langlands
  quotient of a standard module
\begin{equation}  \label{E:standard}
  {\rm Ind}_P^{H(V_m)} \tau_1\cdot \chi_W  | \det |^{s_1} \otimes\cdots\otimes
  \tau_k \cdot \chi_W | \det |^{s_k} \otimes \sigma' \end{equation}
with  $\tau_i$ unitary discrete series representations of some
$\GL_{n_i}$, $\sigma'$ a tempered representation of  some $H(V_{m'})$
with $m' < m$, and
\[  s_1 \geq s_2 \geq\cdots\geq s_k  > 0.\] 
We need to show that only possibilities (a) and (b) as given in
Proposition \ref{P:ds} can occur.  

\vskip 5pt

\item  Let $t=n_1$. From the standard module above, we see that there exists a maximal
parabolic subgroup $Q=Q(Y_t)$ of $H= H(V_m)$ stabilizing a
$t$-dimensional isotropic subspace $Y_t$, with Levi component $L(Y_t)
= \GL(Y_t) \times H(V_{m-2t})$, such that
\[ \sigma \hookrightarrow \Ind^{H}_{Q}(\tau \cdot \chi_W |\det|^{-s_1}
\otimes \sigma_0).  \] 
Here, we have written $V = Y_t \oplus V_{m-2t}
\oplus Y_t^*$ with $Y_t^*$ isotropic, $\tau=(\tau_1^c)^\vee$, where
$^c$ indicates the conjugation by the generator of $\operatorname{Gal}(E/F)$, $s_1 >
0$ is the leading exponent as in (\ref{E:standard}) and $\sigma_0$ is
an irreducible representation of $H(V_{m-2t})$.
Thus we have  a nonzero $G(W) \times H$-equivariant map
\[ \omega_{V_{m},W} \longrightarrow \pi \otimes
 {\rm Ind}^{H}_{Q}(
\tau\cdot \chi_W |\det|^{-s_1} \otimes \sigma_0). \] 
By Frobenius reciprocity, we have 
\[ \pi^{\vee} \hookrightarrow \Hom_{L(Y_t)}(R_Q(\omega_{V_{m},W} ),
\tau\cdot \chi_W|\det|^{-s_1} \otimes \sigma_0). \] 
\vskip 5pt

\item By \cite{k83}, the Jacquet module $R_Q(\omega_{V_{m},W} )$ has
an equivariant filtration
\[ R_Q(\omega_{V_{m},W}) = R^0 \supset R^1 \supset \cdots \supset R^t
\supset R^{t+1} = 0 \] 
whose successive quotient $J^a = R^a/R^{a+1}$
is described in \cite[Lemma C.2]{gi}. 
More precisely,
\begin{align*}
 J^a& = \Ind^{\GL(Y_t) \times H(V_{m-2t}) \times G(W)}_{Q(Y_{t-a}, Y_t)
\times H(V_{m-2t}) \times P(X_a)}\\
&\qquad\left(\chi_W
|{\det}_{Y_{t-a}}|^{\lambda_{t-a}} \otimes C_c^\infty(\Isom_{E,c}(X_a,Y_a)) \otimes
\omega_{V_{m-2t}, W_{n-2a}}\right),
\end{align*}
where
\begin{enumerate}[\textperiodcentered]
\item $\lambda_{t-a} = ( n-m + t-a +\epsilon_0) /2 $,
\item $W = X_a + W_{n-2a} +X_a^*$ with $X_a$ an $a$-dimensional
isotropic space and $\dim W_{n-2a} = n-2a$,
\item $Y_t = Y_{t-a} + Y_a'$ and $Q(Y_{t-a}, Y_a)$ is the maximal
parabolic subgroup of $\GL(Y_t)$ stabilizing $Y_{t-a}$.
\item[\textperiodcentered] $\Isom_{E,c}(X_a, Y_a)$ is the set of
  $E$-conjugate-linear isomorphisms from $X_a$ to $Y_a$;
\item[\textperiodcentered] $\GL(X_a)\times\GL(Y_a)$ acts on
  $C_c^\infty(\Isom_{E,c}(X_a,Y_a))$ as 
  \[ ((b,c)\cdot f)(g)=\chi_V(\det b)\chi_W(\det
  c)f(c^{-1}g b) \]
   for $(b,c)\in \GL(X_a)\times\GL(Y_a)$, $f\in
  C_c^\infty(\Isom_{E,c}(X_a,Y_a))$ and $g\in\GL_a$.
\item $J^a=0$ for $a>\min\{t, q\}$, where $q$ is
  the Witt index of $W$.
\end{enumerate} 
In particular, the bottom piece of the filtration (if nonzero) is:
\[ J^t \cong \Ind^{\GL(Y_t) \times H(V_{m-2t}) \times G(W)}_{\GL(Y_t)
\times H(V_{m-2t}) \times P(X_t)} (C_c^\infty(\Isom_{E,c}(X_t,Y_t))\otimes \omega_{V_{m-2t},
W_{n-2t}}). \]
  \vskip 5pt

Thus for some $0 \leq a \leq t$, there is a nonzero map
\[ \pi^{\vee} \longrightarrow \Hom_{L(Y_t)}( J^a, \tau \cdot \chi_W |\det|^{-s_1}
\otimes \sigma_0). \] 
 We now consider different possibilities in turn.
 \vskip 5pt
 
\item Consider first the case when $a = t$. Then
\begin{align*}
0\neq &\Hom_{L(Y_t)}(J^t,  \tau \cdot \chi_W|\det|^{-s_1} \otimes \sigma_0)\\
 &\qquad\qquad\qquad
= \left(  \Ind^{G(W)}_{P(X_t)} \tau^{\vee} \cdot \chi_V |\det|^{s_1} 
 \otimes \Theta_{V_{m-2t}, W_{n-2t}}(\sigma_0) \right)^*
\end{align*}
so that one has an equivariant map
\[      
\Ind^{G(W)}_{P(X_t)} \tau^{\vee} \cdot \chi_V |\det|^{s_1}
 \otimes \Theta_{V_{m-2t}, W_{n-2t}}(\sigma_0)   \longrightarrow \pi. \]
If this is nonzero,  then by Frobenius reciprocity and Cansselman's
criterion, one has a  contradiction to the temperedness of $\pi$ (since $s_1 > 0$).
\vskip 5pt

\vskip 5pt

\item Now suppose that $t=1$ and $a=0$. Then
\[   \Hom_{L(Y_1)}( J^0, \tau \cdot \chi_W |\det|^{-s_1} \otimes \sigma_0) = \Hom_{L(Y_1)}( \chi_W |-|^{\lambda_1} \otimes \omega_{V_{m-2}, W}, \tau \cdot \chi_W |- |^{-s_1} \otimes \sigma_0).
 \]
This Hom space is nonzero if and only if  
\[  \tau = {\bf 1},  \quad   s_1 =  - \lambda_1  = \frac{m- (n+\epsilon_0) -1}{2} \quad \text{and}  \quad \Theta_{V_{m-2}, W}(\sigma_0) \ne 0. \]
For $\pi^{\vee}$ to embed into this Hom space, we need $\pi$ to be a quotient of $\Theta_{V_{m-2}, W}(\sigma_0)$, or equivalently $\sigma_0$ is a quotient of $\Theta_{W, V_{m-2}}(\pi)$.  
This gives the possibility (a) of the proposition. 
 \vskip 5pt

\item The remaining case is $t > a$ and $t> 1$.  Note that $t - a \geq
  1$. In this case, the non-vanishing of
$\Hom_{L(Y_t)}( J^a, \tau\cdot\chi_W|\det|^{-s_1} \otimes \sigma_0)$ is equivalent to
\begin{align} \label{E:non0} 
&\Hom_{\GL(Y_{t-a}) \times \GL(Y'_a)\times H(V_{m-2t})} \Big(
\chi_W  |{\det}_{Y_{t-a}}|^{\lambda_{t-a}}\otimes C_c^\infty(\Isom_{E,c}(X_a,Y_a)) \\
\notag&\qquad\qquad\qquad
\otimes \omega_{V_{m-2t}, W_{n-2a}},
R_{\overline{Q(Y_{t-a}, Y_t)}}(\tau)\cdot \chi_W |{\det}_{Y_t}|^{-s_1}
\otimes \sigma_0 \Big)\ne 0.
\end{align} \vskip 5pt

\item Since $\tau$ is an irreducible (unitary) discrete series
representation of $\GL(Y_t)$, by results of Zelevinsky (see \cite[Pg. 105]{M1}) we have
\[ R_{\overline{Q(Y_{t-a}, Y_t)}}(\tau) = \delta_1 |\det|^{e_1} \otimes
\delta_2|\det|^{e_2} \] for some irreducible (unitary) discrete series
representations $\delta_1$ and $\delta_2$ of $\GL(Y_{t-a})$ and
$\GL(Y_a')$ respectively, and some $e_1, e_2 \in \mathbb{R}$ such that
\begin{equation}
\label{E:t1t2} e_1 < e_2 \quad \text{and} \quad e_1 \cdot (t-a) + e_2
\cdot a = 0.
\end{equation} In particular, we must have $e_1 \le 0$. Note that if
$a=0$, then $e_1 = 0$.

\vskip 5pt

\item Now, the center of $\GL(Y_{t-a})$ acts on
$R_{\overline{Q(Y_{t-a}, Y_t)}}(\tau)\cdot \chi_W
|{\det}_{Y_t}|^{-s_1}$ by the character $\omega_{\delta_1} \cdot \chi_W
|\det|^{e_1 -s_1}$, whereas $\GL(Y_{t-a})$ acts on
$\chi_W |\det|^{\lambda_{t-a}} \otimes C_c^\infty(\Isom_{E,c}(X_a,Y_a)) \otimes
\omega_{V_{m-2t}, W_{n-2a}}$ by the character
$\chi_W |\det|^{\lambda_{t-a}}$. Here $\omega_{\delta_1}$ is the central
character of $\delta_1$ which is a unitary character.  For
(\ref{E:non0}) to hold, we must have $t-a =1$ (so that $a>0$), $\delta_1$ equal to the
trivial character and
 \[ e_1 -s_1 = \lambda_1 = (n +\epsilon_0 +1-m)/2 \in \frac{1}{2}
\mathbb{Z}. \] 
This has a chance of holding because both $e_1 - s_1$ and $\lambda_1$  are $<0$.
\vskip 5pt

\item Moreover, by results of Zelevinsky (see \cite[Pg. 105]{M1}), we must have
\[ \tau = {\rm St}_t \] so that
\begin{equation} \label{E:t} e_1 = -(t-1)/2, \quad e_2 =1/2 \quad
\text{and} \quad \delta_2 = {\rm St}_{t-1}.  
\end{equation} 
Then we deduce that
\begin{equation} \label{E:supp}
 \pi^{\vee} \hookrightarrow \left( {\rm Ind}_{P(X_a)}^{G(W)}
{\rm St}_a \chi_V |\det|^{s_1-e_2} \otimes \Theta_{V_{m-2t},
W_{n-2a}}(\sigma_0)\right)^{\vee}.  \end{equation}
 Since $\pi$ is tempered, we have
\begin{equation}  \label{E:s1}
 s_1 \leq  e_2 = 1/2.  
 \end{equation}

\vskip 5pt

\item To summarize, we have shown that  $a>0$,
\[ s_1 =e_1 -\lambda_1 = e_1 + \frac{m_0 - (n+\epsilon_0) -1}{2} \in
\frac{1}{2} \mathbb{Z} \] and
\[ 0 < s_1\leq \frac{1}{2}. \] 
 Hence $s_ 1 = 1/2$ and $\tau = {\rm St}_t$. Together with (\ref{E:t}), we deduce that
 \[  t = m - (n+\epsilon_0) -1  > 1.\]
 This gives possibility (b) in Proposition \ref{P:ds}. Moreover,
 (\ref{E:supp}) shows that when (b) holds,  \[ {\rm
   Ind}_{P(X_a)}^{G(W)} \chi_V \cdot {\rm St}_a \otimes
 \Theta_{V_{m-2t}, W_{n-2a}}(\sigma_0) \twoheadrightarrow \pi,  \] 
with $a = t-1>0$.
Hence there is an irreducible subquotient $\pi_0$ of
$\Theta_{V_{m-2t}, W_{n-2a}}(\sigma_0)$  such that
\[ {\rm Ind}_{P(X_a)}^{G(W)} \chi_V \cdot {\rm St}_a \otimes \pi_0 \twoheadrightarrow \pi.  \]
Then it follows by Casselman's temperedness criterion that $\pi_0$ is itself tempered.  
Hence, $\chi_V  \cdot {\rm St}_{a}$ is contained in the
square-integrable support of $\pi$.
 
  \end{enumerate}
 \vskip 5pt
 
This completes the proof of Proposition \ref{P:ds}.

   \vskip 10pt

\bibliographystyle{amsalpha}

\begin{thebibliography}{99}
\bibitem[GI]{gi}
W.~T.~Gan and A.~Ichino,
\emph{Formal degrees and local theta correspondence},
Invent. Math. 195 (2014), no. 3, 509-672.

\bibitem[GS]{gs}
W.~T.~Gan and G.~Savin,
\emph{Representations of metaplectic groups I: epsilon dichotomy and local Langlands correspondence},
Compos. Math. \textbf{148} (2012), 1655--1694.

\bibitem[GT]{gt}
W.~T.~Gan and S. Takeda,
\emph{A proof of the Howe duality conjecture}, preprint (2014).


 \bibitem[H]{howe}
R. Howe,
\emph{Transcending classical invariant theory},
J. Amer. Math. Soc. \textbf{2} (1989), 535--552.

\bibitem[K]{k83}
S.~S.~Kudla,
\emph{On the local theta-correspondence},
Invent. Math. \textbf{83} (1986), 229--255.

 \bibitem[KR]{kr05}
S.~S.~Kudla and S.~Rallis,
\emph{On first occurrence in the local theta correspondence},
Automorphic representations, $L$-functions and applications:
 progress and prospects,
Ohio State Univ. Math. Res. Inst. Publ. \textbf{11},
de Gruyter, Berlin, 2005, pp.~273--308.

\bibitem[LST]{lst}
J.-S.~Li, B.~Sun, and Y.~Tian,
\emph{The multiplicity one conjecture for local theta correspondences},
Invent. Math. \textbf{184} (2011), 117--124.

\bibitem[Mi]{mi} A. Minguez, \emph{Correspondance de Howe explicite:
    paires duales de type II}, Ann. Sci. \'Ec. Norm. Sup\'er. \textbf{41} (2008), 717--741.


\bibitem[MVW]{mvw}
C.~M{\oe}glin, M.-F.~Vign{\'e}ras, and J.-L.~Waldspurger,
\emph{Correspondances de Howe sur un corps $p$-adique},
Lecture Notes in Mathematics \textbf{1291}, Springer-Verlag, Berlin, 1987.

\bibitem[M1]{M0}  
G. Mui\'c,
\emph{Howe correspondence for discrete series representations; the
  case of $({\rm Sp}(n),{\rm O}(V))$},  J. Reine Angew. Math. 567
(2004), 99-150.

\bibitem[M2]{muic}
G.~Mui\'c, \emph{On the structure of the full lift for the Howe correspondence
 of $(\mathrm{Sp}(n), \mathrm{O}(V))$ for rank-one reducibilities},
Canad. Math. Bull. \textbf{49} (2006), 578--591.

\bibitem[M3]{M1}
 G. Mui\'{c}, {\em On the structure of theta lifts of discrete series for dual pairs $({\rm Sp}(n),{\rm O}(V))$}, Israel J. Math. 164 (2008), 87--124

\bibitem[M4]{M2} 
G. Mui\'{c}, {\em Theta lifts of tempered representations for dual pairs $({\rm Sp}_{2n}, {\rm O}(V))$}, Canadian J. Math. 60 No. 6 (2008), 1306-1335.

\bibitem[R]{rob}  B. Roberts,  {\em Tempered representations and the theta correspondence},
Canadian Journal of Mathematics, 50, (1998), 1105-1118. 

\bibitem[W]{w90}
J.-L.~Waldspurger,
\emph{D\'emonstration d'une conjecture de dualit\'e de Howe
 dans le cas $p$-adique, $p\ne 2$},
Festschrift in honor of I.~I.~Piatetski-Shapiro on the occasion of
 his sixtieth birthday, Part I,
Israel Math. Conf. Proc. \textbf{2}, Weizmann, Jerusalem, 1990, pp.~267--324.



\end{thebibliography}

\end{document}